\newtheorem{theorem}{Theorem}[section]
\newtheorem{corollary}{Corollary}
\newtheorem{lemma}[theorem]{Lemma}
\newtheorem{proposition}{Proposition}
\theoremstyle{definition}
\newtheorem{definition}[theorem]{Definition}
\newtheorem{remark}{Remark}
\newtheorem{ex}{Example}
\title[Hahn's symmetric variational calculus]{Hahn's Symmetric Quantum Variational Calculus}
\author[A. M. C. Brito da Cruz, N. Martins and D. F. M. Torres]{}
\subjclass{39A13, 49K05.}
\keywords{Hahn's symmetric calculus, Euler--Lagrange difference equations,
calculus of variations, Leitmann's principle, quantum calculus.}
\email{artur.cruz@estsetubal.ips.pt}
\email{natalia@ua.pt}
\email{delfim@ua.pt}
\thanks{Part of first author's Ph.D., which is carried out
at the University of Aveiro under the \emph{Doctoral Programme
in Mathematics and Applications} of Universities of Aveiro and Minho.}
\begin{document}

\maketitle


\centerline{\scshape Artur M. C. Brito da Cruz}
\medskip
{\footnotesize
\centerline{Escola Superior de Tecnologia de Set\'{u}bal,
Estefanilha,
2910-761 Set\'{u}bal,
Portugal}
}
\medskip
{\footnotesize
\centerline{Center for Research and Development in Mathematics and Applications}
\centerline{Department of Mathematics, University of Aveiro, 3810-193 Aveiro, Portugal}
}

\medskip


\centerline{\scshape Nat\'{a}lia Martins and Delfim F. M. Torres}
\medskip
{\footnotesize
\centerline{Center for Research and Development in Mathematics and Applications}
\centerline{Department of Mathematics, University of Aveiro, 3810-193 Aveiro, Portugal}
}


\bigskip

\centerline{To George Leitmann on the occasion of his 87th birthday}


\begin{abstract}
We introduce and develop the Hahn symmetric quantum calculus with applications
to the calculus of variations. Namely, we obtain a necessary optimality condition
of Euler--Lagrange type and a sufficient optimality condition for variational
problems within the context of Hahn's symmetric calculus. Moreover, we show the
effectiveness of Leitmann's direct method when applied to Hahn's symmetric
variational calculus. Illustrative examples are provided.
\end{abstract}


\section{Introduction}

Due to its many applications, quantum operators are recently subject
to an increase number of investigations \cite{Malinowska,MyID:146,MyID:220}.
The use of quantum differential operators, instead of classical derivatives,
is useful because they allow to deal with sets of nondifferentiable functions
\cite{MyID:188,MyID:111}. Applications include several fields of physics,
such as cosmic strings and black holes \cite{Strominger}, quantum mechanics
\cite{Feynman,Youm}, nuclear and high energy physics \cite{Lavagno},
just to mention a few. In particular, the $q$-symmetric quantum calculus
has applications in quantum mechanics \cite{Lavagno2}.

In 1949, Hahn introduced his quantum difference operator \cite{Hahn},
which is a generalization of the quantum $q$-difference operator defined
by Jackson \cite{Jackson}. However, only in 2009, Aldwoah \cite{Aldwoah}
defined the inverse of Hahn's difference operator, and short after,
Malinowska and Torres \cite{Malinowska} introduced and investigated
the Hahn quantum variational calculus. For a deep understanding
of quantum calculus, we refer the reader to
\cite{Aldwoah2,Boole,Cruz,Ernst,Kac,Koekoek} and references therein.

For a fixed $q\in \left] 0,1\right[$ and an $\omega \geq 0$, we introduce
here the Hahn symmetric difference operator of function $f$ at point
$t\neq \displaystyle \frac{\omega }{1-q}$ by
\begin{equation*}
\tilde{D}_{q,\omega }\left[ y\right] \left( t\right)
=\frac{f\left( qt+\omega \right) - f\left( q^{-1}\left( t
-\omega \right) \right) }{\left( q-q^{-1}\right)
t+\left( 1+q^{-1}\right) \omega}.
\end{equation*}
Our main aim is to establish a necessary optimality condition
and a sufficient optimality condition
for the Hahn symmetric variational problem
\begin{equation}
\label{P} \tag{P}
\begin{gathered}
\mathcal{L}(y) =\int_{a}^{b}L\left( t,y^{\sigma }\left( t\right),
\tilde{D}_{q,\omega }\left[ y\right] \left( t\right) \right) \tilde{d} _{q,\omega }t
\longrightarrow \textrm{extremize} \\
y\in \mathcal{Y}^{1}\left( \left[ a,b\right]_{q,\omega} ,\mathbb{R} \right) \\
y\left( a\right) =\alpha, \quad y\left(b\right) =\beta ,
\end{gathered}
\end{equation}
where $\alpha $ and $\beta $ are fixed real numbers, and extremize means maximize or minimize.
Problem \eqref{P} will be clear and precise after definitions of Section~\ref{Pre}.
We assume that the Lagrangian $L$ satisfies the following hypotheses:
\begin{enumerate}
\item[(H1)] $\left( u,v\right) \rightarrow L\left( t,u,v\right) $
is a $C^{1}\left(\mathbb{R}^{2}, \mathbb{R}\right)$ function for any $t\in I$;

\item[(H2)] $t\rightarrow L\left( t,y^{\sigma }\left( t\right),
\tilde{D}_{q,\omega }\left[ y\right]\left(t\right) \right)$
is continuous at $\omega_{0}$ for any admissible function $y$;

\item[(H3)] functions $t\rightarrow \partial_{i+2}
L\left(t, y^{\sigma}\left( t\right),
\tilde{D}_{q,\omega }\left[ y\right]\left(t\right) \right)$ belong to
$\mathcal{Y}^{1}\left( \left[a,b\right]_{q,\omega}, \mathbb{R}\right)$
for all admissible $y$, $i=0,1$;
\end{enumerate}
where $I$ is an interval of $\mathbb{R}$ containing
$\omega _{0}:=\displaystyle\frac{\omega }{1-q}$,
$a,b\in I$, $a<b$, and $\partial_{j}L$ denotes the partial
derivative of $L$ with respect to its $j$th argument.

In Section~\ref{Pre} we introduce the necessary definitions and prove
some basic results for the Hahn symmetric calculus.
In Section~\ref{M} we formulate and prove our main results for the
Hahn symmetric variational calculus. New results include a necessary
optimality condition (Theorem~\ref{Euler}) and a sufficient
optimality condition (Theorem~\ref{Suficiente}) to problem \eqref{P}.
In Section~\ref{L} we show that Leitmann's direct method can also be
applied to variational problems within Hahn's symmetric variational calculus.
Leitmann introduced his direct method in the sixties of the 20th century \cite{Leitmann3},
and the approach has recently proven to be universal: see, \textrm{e.g.},
\cite{MyID:154,Leitmann1,Leitmann2,Leitmann4,Leitmann5,Leitmann6,Malinowska2,delfim:GL}.


\section{Hahn's symmetric calculus}
\label{Pre}

Let $q\in \left] 0,1\right[ $ and $\omega \geq 0$ be real fixed numbers.
Throughout the text, we make the assumption that $I$ is an interval
(bounded or unbounded) of $\mathbb{R}$ containing
$\omega _{0}:=\displaystyle\frac{\omega }{1-q}$.
We denote by $I^{q,\omega }$ the set
$I^{q,\omega }:=qI+\omega :=\left\{qt+\omega: t\in I\right\}$.
Note that $I^{q,\omega }\subseteq I$ and, for all $t\in I^{q,\omega }$,
one has $q^{-1}\left(t-\omega\right)\in I$ . For $k\in \mathbb{N}_{0}$,
$$
\left[ k\right] _{q}:=\displaystyle\frac{1-q^{k}}{1-q}.
$$

\begin{definition}
\label{def:dhsd}
Let $f$ be a real function defined on $I$. The Hahn symmetric difference operator
of $f$ at a point $t\in I^{q,\omega }\backslash \left\{ \omega_{0}\right\}$
is defined by
\begin{equation*}
\tilde{D}_{q,\omega }\left[ f\right] \left( t\right)
=\frac{f\left( qt+\omega \right)
-f\left( q^{-1}\left( t-\omega \right) \right) }{\left( q-q^{-1}\right)
t+\left( 1+q^{-1}\right) \omega },
\end{equation*}
while $\tilde{D}_{q,\omega }\left[ f\right] \left( \omega _{0}\right)
:=f^{\prime }\left( \omega _{0}\right)$,
provided $f$ is differentiable at $\omega _{0}$ (in the classical sense).
We call to $\tilde{D}_{q,\omega }\left[ f\right]$
the Hahn symmetric derivative of $f$.
\end{definition}

\begin{remark}
If $\omega =0$, then the Hahn symmetric difference operator $\tilde{D}_{q,\omega }$
coincides with the $q$-symmetric difference operator $\tilde{D}_{q}$: if $t\neq 0$, then
\begin{equation*}
\tilde{D}_{q,0}\left[ f\right](t)
=\frac{f\left( qt\right) -f\left( q^{-1}t\right) }{\left( q-q^{-1}\right) t}
=: \tilde{D}_{q}\left[ f\right] \left( t\right);
\end{equation*}
for $t = 0$ and $f$ differentiable at $0$, $\tilde{D}_{q,0}\left[ f\right](0)
= f^{\prime }\left( 0\right) =: \tilde{D}_{q}\left[ f\right] \left( 0\right)$.
\end{remark}

\begin{remark}
If $\omega> 0$ and we let $q\rightarrow 1$ in Definition~\ref{def:dhsd},
then we obtain the well known symmetric difference operator $\tilde{D}_{\omega}$:
\begin{equation*}
\tilde{D}_{\omega}\left[ f\right] \left( t\right)
=\frac{f\left( t+\omega\right) -f\left( t-\omega\right) }{2\omega}.
\end{equation*}
\end{remark}

\begin{remark}
If $f$ is differentiable at $t\in I^{q,\omega }$ in the classical sense, then
\begin{equation*}
\lim_{(q,\omega)\rightarrow (1,0)}\tilde{D}_{q,\omega }\left[
f \right] \left( t\right) =f^{\prime }\left( t\right) .
\end{equation*}
\end{remark}

In what follows we make use of the operator $\sigma$ defined by
$\sigma \left( t\right) :=qt+\omega $, $t\in I$.
Note that the inverse operator of $\sigma$, $\sigma^{-1}$,
is defined by $\sigma^{-1}\left(t\right):=q^{-1}\left(t-\omega\right)$.
Moreover, Aldwoah \cite[Lemma~6.1.1]{Aldwoah} proved the following useful result.

\begin{lemma}[\cite{Aldwoah}]
\label{lemma:2.2}
Let $k\in \mathbb{N}$ and $t\in I$. Then,
\begin{enumerate}
\item $\sigma^{k}\left( t\right) =\underset{k\text{ times}}{\underbrace{\sigma
\circ \sigma \circ \cdots \circ \sigma }}\left( t\right)
=q^{k}t+\omega\left[k\right]_{q}$;

\item $\left( \sigma^{k}\left( t\right) \right)^{-1}
=\sigma^{-k}\left( t\right)
=\displaystyle q^{-k}\left( t-\omega \left[ k\right] _{q}\right)$.
\end{enumerate}
Furthermore, $\{\sigma ^{k}\left( t\right)\}_{k=1}^{\infty}$ is a decreasing
(resp. an increasing) sequence in $k$ when $t > \omega_0$ (resp. $t < \omega_0$) with
$$
\omega_0 = \inf_{k \in \mathbb{N}} \sigma^{k}(t)
\quad \left(resp. \ \ \omega_0 = \sup_{k \in \mathbb{N}} \sigma^{k}(t)\right).
$$
The sequence $\{\sigma^{-k}(t)\}_{k=1}^{\infty}$ is increasing
(resp. decreasing) when $t > \omega_0$ (resp. $t < \omega_0$) with
$$
\infty = \sup_{k \in \mathbb{N}} \sigma^{-k}(t)
\quad \left(resp. \ \ -\infty = \inf_{k \in \mathbb{N}} \sigma^{-k}(t)\right).
$$
\end{lemma}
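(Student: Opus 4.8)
The plan is to establish parts (1) and (2) by induction on $k$, and then to read off the monotonicity assertions and the extremal values from the resulting closed-form expressions.

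First I would prove (1) by induction on $k$. For $k=1$ the identity reads $\sigma(t)=qt+\omega$, which holds because $[1]_q=1$. Assuming $\sigma^{k}(t)=q^{k}t+\omega[k]_q$, apply $\sigma$ once more to get $\sigma^{k+1}(t)=q\bigl(q^{k}t+\omega[k]_q\bigr)+\omega=q^{k+1}t+\omega\bigl(q[k]_q+1\bigr)$, and the elementary identity $q[k]_q+1=[k+1]_q$ closes the induction. For (2), since $\sigma(t)=qt+\omega$ is an affine bijection of $\mathbb{R}$ with nonzero slope $q$, the iterate $\sigma^{k}$ is invertible and $\sigma^{-k}=(\sigma^{k})^{-1}$; solving $s=q^{k}t+\omega[k]_q$ for $t$ gives $\sigma^{-k}(s)=q^{-k}\bigl(s-\omega[k]_q\bigr)$. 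Alternatively one may run the same induction starting from $\sigma^{-1}(t)=q^{-1}(t-\omega)$.

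For the monotonicity statements I would rewrite the closed forms in terms of $\omega_0=\omega/(1-q)$. Using $[k+1]_q-[k]_q=q^{k}$ one obtains $\sigma^{k+1}(t)-\sigma^{k}(t)=q^{k}(q-1)(t-\omega_0)$, and, after rewriting $\sigma^{-k}(t)=q^{-k}(t-\omega_0)+\omega_0$, one gets $\sigma^{-(k+1)}(t)-\sigma^{-k}(t)=q^{-k}(q^{-1}-1)(t-\omega_0)$. Since $0<q<1$, the coefficient $q^{k}(q-1)$ is negative while $q^{-k}(q^{-1}-1)$ is positive; hence $\{\sigma^{k}(t)\}$ is decreasing and $\{\sigma^{-k}(t)\}$ is increasing precisely when $t>\omega_0$, with the reversed behaviour when $t<\omega_0$ (both sequences being identically $\omega_0$ when $t=\omega_0$).

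Finally, the extremal values follow from the limits. From $q^{k}\to 0$ and $[k]_q\to 1/(1-q)$ we get $\sigma^{k}(t)\to\omega_0$, so monotone convergence identifies $\omega_0$ as $\inf_{k}\sigma^{k}(t)$ when $t>\omega_0$ and as $\sup_{k}\sigma^{k}(t)$ when $t<\omega_0$. Likewise, from $\sigma^{-k}(t)=q^{-k}(t-\omega_0)+\omega_0$ and $q^{-k}\to+\infty$ we get $\sigma^{-k}(t)\to+\infty$ when $t>\omega_0$ and $\sigma^{-k}(t)\to-\infty$ when $t<\omega_0$, which together with the monotonicity yields $\sup_{k}\sigma^{-k}(t)=+\infty$ and $\inf_{k}\sigma^{-k}(t)=-\infty$ respectively. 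I do not expect any genuine obstacle here; the only point that needs care is the sign bookkeeping — keeping straight that the cases $t>\omega_0$ and $t<\omega_0$ produce opposite monotonicity for the forward iterates $\sigma^{k}$ and for the backward iterates $\sigma^{-k}$.
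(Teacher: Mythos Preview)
Your argument is correct in every detail: the induction for (1), the inversion for (2), the difference computations $\sigma^{k+1}(t)-\sigma^{k}(t)=q^{k}(q-1)(t-\omega_0)$ and $\sigma^{-(k+1)}(t)-\sigma^{-k}(t)=q^{-k}(q^{-1}-1)(t-\omega_0)$, and the limit identifications all check out. Note, however, that the paper does not supply its own proof of this lemma at all; it is quoted from Aldwoah's thesis \cite[Lemma~6.1.1]{Aldwoah}, so there is no in-paper argument to compare against. What you have written is the natural elementary proof one would expect for such a result.
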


For simplicity of notation, we write
$f\left( \sigma \left( t\right) \right) :=f^{\sigma}\left( t\right)$.

\begin{remark}
With above notations, if $t\in I^{q,\omega }\backslash \left\{ \omega _{0}\right\}$,
then the Hahn symmetric difference operator of $f$ at point $t$ can be written as
\begin{equation*}
\tilde{D}_{q,\omega }\left[ f\right] \left( t\right)
=\frac{f^{\sigma}\left( t\right) -f^{\sigma ^{-1}}\left(
t\right) }{\sigma \left( t\right)-\sigma ^{-1}\left(t\right)}.
\end{equation*}
\end{remark}

\begin{lemma}
\label{q^n} Let $n \in \mathbb{N}_{0}$ and $t\in I$. Then,
\begin{equation*}
\sigma^{n+1}\left( t\right) -\sigma ^{n-1}\left( t\right)
=q^{n}\left( \sigma\left( t\right) - \sigma ^{-1}\left( t\right) \right),
\end{equation*}
where $\sigma^{0}\equiv id$ is the identity function.
\end{lemma}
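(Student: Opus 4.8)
The plan is to reduce everything to the explicit formula for $\sigma^{k}$ provided by Lemma~\ref{lemma:2.2}(1), namely $\sigma^{k}(t)=q^{k}t+\omega[k]_{q}$, which is valid for $k\in\mathbb N$, and to note that it also holds for $k=0$ since $[0]_{q}=0$ and $\sigma^{0}=\mathrm{id}$, and for $k=-1$ since $\sigma^{-1}(t)=q^{-1}(t-\omega)=q^{-1}t+\omega[-1]_{q}$ with $[-1]_{q}:=(1-q^{-1})/(1-q)=-q^{-1}$. In other words, the single identity $\sigma^{k}(t)=q^{k}t+\omega[k]_{q}$ may be used uniformly for all integers $k\geq -1$ appearing in the statement (for $n=0$ we meet the exponents $n+1=1$, $n-1=-1$, $\sigma^{1}$, $\sigma^{-1}$; for $n\geq 1$ all exponents are $\geq 0$).

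First I would compute the left-hand side directly:
\begin{equation*}
\sigma^{n+1}(t)-\sigma^{n-1}(t)
=\bigl(q^{n+1}t+\omega[n+1]_{q}\bigr)-\bigl(q^{n-1}t+\omega[n-1]_{q}\bigr)
=(q^{n+1}-q^{n-1})t+\omega\bigl([n+1]_{q}-[n-1]_{q}\bigr).
\end{equation*}
The coefficient of $t$ factors immediately as $q^{n+1}-q^{n-1}=q^{n}(q-q^{-1})$. For the $\omega$-term, using $[k]_{q}=(1-q^{k})/(1-q)$ one gets
\begin{equation*}
[n+1]_{q}-[n-1]_{q}=\frac{(1-q^{n+1})-(1-q^{n-1})}{1-q}
=\frac{q^{n-1}-q^{n+1}}{1-q}=q^{n}\cdot\frac{q^{-1}-q}{1-q}
=q^{n}\bigl(1+q^{-1}\bigr),
\end{equation*}
after simplifying $(q^{-1}-q)/(1-q)=q^{-1}(1-q^{2})/(1-q)=q^{-1}(1+q)=1+q^{-1}$. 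Hence
\begin{equation*}
\sigma^{n+1}(t)-\sigma^{n-1}(t)=q^{n}\Bigl((q-q^{-1})t+(1+q^{-1})\omega\Bigr).
\end{equation*}

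Next I would recognize the bracketed expression as exactly the denominator appearing in Definition~\ref{def:dhsd}, which equals $\sigma(t)-\sigma^{-1}(t)$: indeed $\sigma(t)-\sigma^{-1}(t)=(qt+\omega)-q^{-1}(t-\omega)=(q-q^{-1})t+(1+q^{-1})\omega$. Substituting gives $\sigma^{n+1}(t)-\sigma^{n-1}(t)=q^{n}\bigl(\sigma(t)-\sigma^{-1}(t)\bigr)$, which is the claim. There is essentially no obstacle here: the only point requiring a moment's care is making sure the closed form $\sigma^{k}(t)=q^{k}t+\omega[k]_{q}$ is legitimately applied with the negative exponent $k=-1$ and the zero exponent $k=0$, which one checks by hand as above; the rest is a one-line algebraic simplification.
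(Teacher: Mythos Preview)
Your proof is correct and follows essentially the same approach as the paper: both expand $\sigma^{n+1}(t)-\sigma^{n-1}(t)$ using the closed form $\sigma^{k}(t)=q^{k}t+\omega[k]_{q}$, factor out $q^{n}$, and identify the bracket $(q-q^{-1})t+(1+q^{-1})\omega$ as $\sigma(t)-\sigma^{-1}(t)$. Your additional remark checking that the closed form remains valid for $k=0$ and $k=-1$ is a small clarification the paper omits, but otherwise the arguments coincide.
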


\begin{proof}
The equality follows by direct calculations:
\begin{equation*}
\begin{split}
\sigma^{n+1}(t) -\sigma ^{n-1}\left( t\right)&=q^{n+1}t
+\omega \left[ n+1\right] _{q}-q^{n-1}t
-\omega \left[ n-1\right]_{q}\\ &= q^{n}\left( q-q^{-1}\right)
t+\omega \left( q^{n}+q^{n-1}\right)\\ &=q^{n}\left( qt+\omega
-q^{-1}t+q^{-1}\omega \right)\\
&=q^{n}\left( \sigma \left( t\right) -\sigma ^{-1}\left( t\right)\right).
\end{split}
\end{equation*}
\end{proof}

The Hahn symmetric difference operator has the following properties.

\begin{theorem}
\label{props derivada}
Let $\alpha, \beta \in \mathbb{R}$ and $t\in I^{q,\omega }$.
If $f$ and $g$ are Hahn symmetric differentiable on $I$, then
\begin{enumerate}
\item $\tilde{D}_{q,\omega}\left[ \alpha f+\beta g\right] \left( t\right)
=\alpha\tilde{D}_{q,\omega}\left[ f\right] \left( t\right)
+\beta\tilde {D}_{q,\omega}\left[ g\right] \left( t\right)$;

\item $\tilde{D}_{q,\omega}\left[ fg\right] \left( t\right)
=\tilde {D}_{q,\omega}\left[ f\right] \left( t\right) g^{\sigma}\left( t\right)
+f^{\sigma^{-1}}\left(t\right) \tilde{D}_{q,\omega}\left[ g\right] \left(t\right)$;

\item $\tilde{D}_{q,\omega }\left[ \displaystyle\frac{f}{g}\right] \left(
t\right) =\displaystyle\frac{\tilde{D}_{q,\omega }\left[ f\right] \left( t\right)
g^{\sigma ^{-1}}\left( t\right) -f^{\sigma ^{-1}}\left( t\right)
\tilde{D}_{q,\omega }\left[ g\right] \left( t\right) }{g^{\sigma }\left( t\right)
g^{\sigma ^{-1}}\left( t\right) }$ if $g^{\sigma }\left( t\right)
g^{\sigma ^{-1}}\left( t\right) \neq 0$;

\item $\tilde{D}_{q,\omega }\left[ f\right] \equiv 0$
if, and only if, $f$ is constant on $I$.
\end{enumerate}
\end{theorem}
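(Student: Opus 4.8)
The plan is to treat the four items one at a time, in each case splitting into the generic case $t\in I^{q,\omega}\setminus\{\omega_0\}$, where we may use the difference-quotient form $\tilde D_{q,\omega}[f](t)=\bigl(f^{\sigma}(t)-f^{\sigma^{-1}}(t)\bigr)/\bigl(\sigma(t)-\sigma^{-1}(t)\bigr)$ recorded in the remark above, and the exceptional case $t=\omega_0$, where by definition $\tilde D_{q,\omega}$ is the classical derivative and $\sigma(\omega_0)=\sigma^{-1}(\omega_0)=\omega_0$.

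Item (1) is immediate from the definition: away from $\omega_0$ the numerator of the difference quotient is additive and homogeneous in the function while the denominator does not involve the function, and at $\omega_0$ it is linearity of $f\mapsto f'(\omega_0)$. For item (2), at $t\neq\omega_0$ I would start from $\tilde D_{q,\omega}[fg](t)=\bigl(f^{\sigma}(t)g^{\sigma}(t)-f^{\sigma^{-1}}(t)g^{\sigma^{-1}}(t)\bigr)/\bigl(\sigma(t)-\sigma^{-1}(t)\bigr)$, insert $\pm f^{\sigma^{-1}}(t)g^{\sigma}(t)$ in the numerator, and regroup the two resulting differences to read off $\tilde D_{q,\omega}[f](t)g^{\sigma}(t)+f^{\sigma^{-1}}(t)\tilde D_{q,\omega}[g](t)$; at $\omega_0$ it is the ordinary Leibniz rule together with $f^{\sigma}(\omega_0)=f^{\sigma^{-1}}(\omega_0)=f(\omega_0)$ and similarly for $g$. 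Item (3) is the same kind of computation: after clearing denominators one has $\tilde D_{q,\omega}[f/g](t)=\bigl(f^{\sigma}(t)g^{\sigma^{-1}}(t)-f^{\sigma^{-1}}(t)g^{\sigma}(t)\bigr)/\bigl((\sigma(t)-\sigma^{-1}(t))g^{\sigma}(t)g^{\sigma^{-1}}(t)\bigr)$, and inserting $\pm f^{\sigma^{-1}}(t)g^{\sigma^{-1}}(t)$ in the numerator and regrouping gives the claimed formula, the hypothesis $g^{\sigma}(t)g^{\sigma^{-1}}(t)\neq0$ being precisely what makes the manipulation legitimate; at $\omega_0$ one invokes the classical quotient rule. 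Alternatively, (3) can be deduced from (2) applied to $f=(f/g)\cdot g$.

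The only part that is not a bare algebraic identity is the converse implication in (4). One direction is trivial: if $f$ is constant then $f^{\sigma}\equiv f^{\sigma^{-1}}$ and $f'(\omega_0)=0$, so $\tilde D_{q,\omega}[f]\equiv0$. For the converse, $\tilde D_{q,\omega}[f]\equiv 0$ forces $f^{\sigma}(t)=f^{\sigma^{-1}}(t)$ for every $t\in I^{q,\omega}\setminus\{\omega_0\}$; writing $t=\sigma(s)$ — which is allowed because $\sigma(s)\in I^{q,\omega}$ for all $s\in I$ and $\sigma(s)=\omega_0$ only when $s=\omega_0$ — this becomes $f(\sigma^{2}(s))=f(s)$ for every $s\in I$, and hence by induction $f(\sigma^{2k}(s))=f(s)$ for all $k\in\mathbb{N}$. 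By Lemma~\ref{lemma:2.2} the sequence $\sigma^{2k}(s)$ converges to $\omega_0$, and since $f$ is differentiable, hence continuous, at $\omega_0$, passing to the limit gives $f(s)=f(\omega_0)$, so $f$ is constant on $I$. I expect this last limiting argument — leveraging continuity at the single point $\omega_0$ together with the contraction of the orbits $\sigma^{2k}(s)$ toward $\omega_0$ to force global constancy — to be the main point; everything else is bookkeeping with $\sigma$ and $\sigma^{-1}$.
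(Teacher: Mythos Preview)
Your proposal is correct and follows essentially the same path as the paper's proof: the same case split $t=\omega_0$ versus $t\neq\omega_0$, the same add-and-subtract manipulation for the product rule, and for (4) the identical orbit argument $f=f^{\sigma^{2}}=\cdots=f^{\sigma^{2k}}\to f(\omega_0)$ using Lemma~\ref{lemma:2.2} and continuity at $\omega_0$. The only cosmetic difference is in (3): the paper first computes $\tilde D_{q,\omega}[1/g]$ and then applies the product rule to $f\cdot(1/g)$, whereas you do a direct add-and-subtract in the numerator of the quotient (and mention the product-rule alternative); both are routine and equivalent.
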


\begin{proof}
For $t=\omega_{0}$ the equalities are trivial (note that
$\sigma(\omega_{0})=\omega_{0}=\sigma^{-1}(\omega_{0})$).
We do the proof for $t\neq\omega_{0}$:
\begin{enumerate}
\item
\begin{align*}
\tilde{D}_{q,\omega}\left[ \alpha f+\beta g\right] \left( t\right)
&= \frac{\left( \alpha f+\beta g\right)^{\sigma}\left( t\right)
-\left(\alpha f +\beta g\right)^{\sigma^{-1}}\left( t\right)}{\sigma\left( t\right)
-\sigma^{-1}\left( t\right)}\\ &= \alpha\frac{f^{\sigma}\left( t\right)
-f^{\sigma^{-1}}\left(t\right)}{\sigma\left( t\right) -\sigma^{-1}\left( t\right) }
+\beta \frac{g^{\sigma}\left( t\right) -g^{\sigma^{-1}}\left( t\right)}{\sigma\left(t\right)
-\sigma^{-1}\left( t\right)}\\ &= \alpha\tilde{D}_{q,\omega}\left[ f\right] \left( t\right)
+\beta \tilde{D}_{q,\omega}\left[ g\right] \left( t\right) .
\end{align*}

\item
\begin{align*}
\tilde{D}_{q,\omega}\left[ fg\right] \left( t\right)
&=\frac{\left( fg\right) ^{\sigma}\left( t\right)
-\left( fg\right) ^{\sigma^{-1}}\left( t\right)}{\sigma\left( t\right) -\sigma^{-1}\left( t\right) }\\
&= \frac{f^{\sigma}\left( t\right) -f^{\sigma^{-1}}\left( t\right)}{\sigma \left( t\right)
-\sigma^{-1}\left( t\right) }g^{\sigma}\left( t\right)
+f^{\sigma^{-1}}\left( t\right) \frac{g^{\sigma}\left( t\right)
-g^{\sigma^{-1}}\left( t\right)}{\sigma\left( t\right)
-\sigma ^{-1}\left(t\right) } \\
&= \tilde{D}_{q,\omega}\left[ f\right]\left( t\right) g^{\sigma}\left( t\right)
+f^{\sigma^{-1}}\left( t\right) \tilde{D}_{q,\omega}\left[ g\right] \left( t\right).
\end{align*}

\item Because
\begin{align*}
\tilde{D}_{q,\omega }\left[ \frac{1}{g}\right] \left( t\right)
&=\frac{ \frac{1}{g^{\sigma }\left( t\right) }
-\frac{1}{g^{\sigma^{-1}}\left( t\right)}}{\sigma \left( t\right)
-\sigma ^{-1}\left( t\right) } \\
& =-\frac{1}{g^{\sigma }\left( t\right)
g^{\sigma ^{-1}}\left( t\right) } \frac{g^{\sigma }\left(
t\right) -g^{\sigma ^{-1}}\left( t\right) }{\sigma \left( t\right)
-\sigma ^{-1}\left( t\right) } \\
& =-\frac{\tilde{D}_{q,\omega }\left[ g\right]\left(
t\right)}{g^{\sigma}\left(t\right) g^{\sigma^{-1}}\left( t\right)},
\end{align*}
one has
\begin{align*}
\tilde{D}_{q,\omega }\left[ \frac{f}{g}\right] \left( t\right)
&= \tilde{D}_{q,\omega }\left[ f\frac{1}{g}\right] \left( t\right)\\
&= \tilde{D}_{q,\omega}\left[ f\right] \left( t\right) \frac{1}{g^{\sigma }\left( t\right) }
+f^{\sigma ^{-1}}\left( t\right) \tilde{D}_{q,\omega } \left[ \frac{1}{g}\right] \left(t\right)\\
& =\frac{\tilde{D}_{q,\omega }\left[ f\right] \left( t\right) }{g^{\sigma }\left( t\right) }
-f^{\sigma ^{-1}}\left( t\right)
\frac{\tilde{D}_{q,\omega }\left[ g\right] \left( t\right) }{g^{\sigma }\left( t\right)
g^{\sigma ^{-1}}\left( t\right) } \\ & =\frac{\tilde{D}_{q,\omega}\left[ f\right] \left( t\right)
g^{\sigma ^{-1}}\left( t\right) -f^{\sigma ^{-1}}\left( t\right)
\tilde{D}_{q,\omega } \left[ g\right] \left( t\right)}{g^{\sigma }\left(
t\right) g^{\sigma ^{-1}}\left( t\right) }.
\end{align*}

\item If $f$ is constant on $I$, then it is clear that
$\tilde{D}_{q,\omega }\left[f\right] \equiv 0$.
Suppose now that $\tilde{D}_{q,\omega }\left[f\right] \equiv 0$.
Then, for each $t\in I$, $\left( \tilde{D}_{q,\omega }\left[
f\right] \right)^{\sigma}(t) =0$ and, therefore, $f\left( t\right)
=f^{\sigma^{2}}\left( t\right)$. Hence, $f\left( t\right)
=f^{\sigma ^{2}}\left( t\right) =\cdots
=f^{\sigma^{2n}}\left( t\right)$ for each $n\in\mathbb{N}$ and
$t\in I$. Because $\lim_{n\rightarrow +\infty }f\left( t\right)
=\lim_{n\rightarrow +\infty}f^{\sigma ^{2n}}\left( t\right)$,
$\lim_{n\rightarrow + \infty}\sigma^{2n}\left(t\right)
=\omega_{0}$ (by Lemma~\ref{lemma:2.2}), and $f$ is continuous
at $\omega_{0}$, then $f\left( t\right) =f\left( \omega_{0}\right)$
for all $t \in I$.
\end{enumerate}
\end{proof}

\begin{lemma}
\label{composta}
For $t\in I$ one has $\tilde{D}_{q,\omega }\left[ f^{\sigma }\right]\left(t\right)
=q\tilde{D}_{q,\omega }\left[f\right] \left( \sigma \left(t\right) \right)$.
\end{lemma}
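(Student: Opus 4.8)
The plan is to reduce everything to the difference-quotient formula from the Remark preceding the statement, namely $\tilde{D}_{q,\omega}[g](t) = \bigl(g^{\sigma}(t) - g^{\sigma^{-1}}(t)\bigr)/\bigl(\sigma(t) - \sigma^{-1}(t)\bigr)$ for $t \neq \omega_0$, applied with $g = f^{\sigma}$. First I would dispose of the point $t = \omega_0$: since $\sigma(\omega_0) = \omega_0$, both sides are classical derivatives evaluated at $\omega_0$ (using $(f^{\sigma})'(\omega_0) = q f'(\sigma(\omega_0))$ by the chain rule, as $\sigma'(t) = q$), so the identity holds there trivially. For $t \in I^{q,\omega} \setminus \{\omega_0\}$, I would expand the left-hand side as
\begin{equation*}
\tilde{D}_{q,\omega}\left[f^{\sigma}\right](t)
= \frac{\left(f^{\sigma}\right)^{\sigma}(t) - \left(f^{\sigma}\right)^{\sigma^{-1}}(t)}{\sigma(t) - \sigma^{-1}(t)}
= \frac{f^{\sigma^{2}}(t) - f(t)}{\sigma(t) - \sigma^{-1}(t)},
\end{equation*}
where I used $(f^{\sigma})^{\sigma^{-1}} = f^{\sigma \circ \sigma^{-1}} = f$.

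Next I would rewrite the right-hand side. By definition $q\tilde{D}_{q,\omega}[f](\sigma(t))$ equals $q$ times the difference quotient of $f$ at the point $\sigma(t)$, i.e.
\begin{equation*}
q\,\tilde{D}_{q,\omega}[f]\left(\sigma(t)\right)
= q \cdot \frac{f^{\sigma}(\sigma(t)) - f^{\sigma^{-1}}(\sigma(t))}{\sigma(\sigma(t)) - \sigma^{-1}(\sigma(t))}
= q \cdot \frac{f^{\sigma^{2}}(t) - f(t)}{\sigma^{2}(t) - t}.
\end{equation*}
Comparing the two expressions, the identity is equivalent to $\sigma^{2}(t) - t = q\bigl(\sigma(t) - \sigma^{-1}(t)\bigr)$, which is exactly the $n = 1$ case of Lemma~\ref{q^n} (with $\sigma^{n-1} = \sigma^{0} = \mathrm{id}$). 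So I would invoke that lemma to finish. One small caveat to address: for this chain to make sense I need $\sigma(t) \in I^{q,\omega} \setminus \{\omega_0\}$ whenever $t \in I^{q,\omega} \setminus \{\omega_0\}$, which follows since $\sigma(I) \subseteq I^{q,\omega}$ and $\sigma$ fixes only $\omega_0$; the denominators are nonzero away from $\omega_0$ because $\{\sigma^{k}(t)\}$ is strictly monotone there by Lemma~\ref{lemma:2.2}.

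There is essentially no obstacle here — the only thing to be careful about is the bookkeeping of which point each operator is evaluated at, and matching the denominator $\sigma^{2}(t) - t$ against $q(\sigma(t) - \sigma^{-1}(t))$ via Lemma~\ref{q^n}. I would present it as a short direct computation rather than belaboring the edge case.
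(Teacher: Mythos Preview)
Your proposal is correct and follows essentially the same route as the paper: expand both sides as difference quotients, reduce the identity to $\sigma^{2}(t)-t = q(\sigma(t)-\sigma^{-1}(t))$, and invoke Lemma~\ref{q^n} with $n=1$, handling $t=\omega_0$ separately via the classical chain rule. The paper's version is just slightly terser at $\omega_0$ (it simply asserts the equality there), while you spell out the chain-rule justification and the domain bookkeeping more carefully.
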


\begin{proof}
For each $t\in I\backslash \{\omega_{0}\}$ we have
\begin{equation*}
\tilde{D}_{q,\omega }\left[ f^{\sigma }\right] \left( t\right)
=\frac{ f^{\sigma ^{2}}\left( t\right)
-f\left( t\right) }{\sigma \left( t\right)
-\sigma^{-1}\left( t\right) }
\end{equation*}
and
\begin{eqnarray*}
\tilde{D}_{q,\omega }\left[ f\right] \left( \sigma \left( t\right) \right)
&=&\frac{f^{\sigma ^{2}}\left( t\right) -f\left( t\right) }{\sigma ^{2}\left( t\right)-t} \\
&=&\frac{f^{\sigma ^{2}}\left( t\right) -f\left( t\right) }{q\left( \sigma \left( t\right)
-\sigma ^{-1}\left( t\right) \right)} \ \ \text{ (see Lemma~\ref{q^n})}.
\end{eqnarray*}
We conclude that $\tilde{D}_{q,\omega }\left[ f^{\sigma }\right]\left( t\right)
=q\tilde{D}_{q,\omega }\left[ f\right] \left( \sigma \left( t\right) \right)$.
Finally, the intended result follows from the fact that
$\tilde{D}_{q,\omega }\left[ f^{\sigma }\right] \left( \omega_{0}\right)
=q\tilde{D}_{q,\omega }\left[f\right] \left( \omega_{0}\right)$.
\end{proof}

\begin{definition}
Let $a,b\in I$ and $a<b$. For $f:I\rightarrow \mathbb{R}$
the Hahn symmetric integral of $f$ from $a$ to $b$ is given by
\begin{equation*}
\int_{a}^{b}f\left( t\right) \tilde{d}_{q,\omega }t
=\int_{\omega_{0}}^{b}f\left( t\right) \tilde{d}_{q,\omega }t
-\int_{\omega_{0}}^{a}f\left( t\right)\tilde{d}_{q,\omega }t,
\end{equation*}
where
\begin{equation*}
\int_{\omega _{0}}^{x}f\left( t\right) \tilde{d}_{q,\omega }t
=\left( \sigma^{-1}\left( x\right)
-\sigma \left( x\right) \right) \sum_{n=0}^{+\infty}
q^{2n+1}f^{\sigma ^{2n+1}}\left( x\right), \quad x\in I\text{,}
\end{equation*}
provided the series converges at $x=a$ and $x=b$. In that case,
$f$ is said to be Hahn symmetric integrable on $[a,b]$.
We say that $f$ is Hahn symmetric integrable on $I$ if it is
Hahn symmetric integrable over $[a,b]$ for all $a,b\in I$.
\end{definition}

We now present two technical results that will be useful to prove
the fundamental theorem of Hahn's symmetric integral
calculus (Theorem~\ref{Fundamental}).

\begin{lemma}[cf. \cite{Aldwoah}]
\label{lema integral}
Let $a,b\in I$, $a<b$. If $f:I\rightarrow \mathbb{R}$ is continuous
at $\omega_{0}$, then, for $s\in \left[ a,b\right]$, the sequence
$\left( f^{\sigma^{2n+1}}\left( s\right) \right)_{n\in\mathbb{N}}$
converges uniformly to $f\left( \omega _{0}\right)$ on $I$.
\end{lemma}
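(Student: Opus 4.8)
The plan is to convert the abstract convergence $\sigma^{k}(t)\to\omega_0$ provided by Lemma~\ref{lemma:2.2} into an \emph{explicit geometric rate} that is uniform in the base point, and then feed this into the continuity of $f$ at $\omega_0$ by an $\varepsilon$--$\delta$ argument. First I would record that $\omega_0$ is the fixed point of $\sigma$, i.e. $\sigma(\omega_0)=q\omega_0+\omega=\omega_0$, so that Lemma~\ref{lemma:2.2}(1) together with $\omega_0=\frac{\omega}{1-q}$ yields, for every $t\in I$ and $k\in\mathbb{N}$,
\[
\sigma^{k}(t)-\omega_0=q^{k}t+\omega[k]_q-\frac{\omega}{1-q}=q^{k}\left(t-\omega_0\right).
\]
In particular $\bigl|\sigma^{k}(t)-\omega_0\bigr|=q^{k}\,|t-\omega_0|$, and since $0<q^{k}<1$ the point $\sigma^{k}(t)$ lies between $\omega_0$ and $t$; because $I$ is an interval containing $\omega_0$, this shows $\sigma^{k}(s)\in I$ for every $s\in[a,b]\subseteq I$, so that $f^{\sigma^{2n+1}}(s)$ is well defined.

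Next I would set $M:=\max\{|a-\omega_0|,\,|b-\omega_0|\}$, so that $|s-\omega_0|\le M$ for all $s\in[a,b]$. Combining this with the displayed identity gives
\[
\bigl|\sigma^{2n+1}(s)-\omega_0\bigr|=q^{2n+1}|s-\omega_0|\le q^{2n+1}M\qquad(s\in[a,b]),
\]
a bound that tends to $0$ as $n\to\infty$ (since $q\in\left]0,1\right[$) and is independent of $s$. Now let $\varepsilon>0$. By continuity of $f$ at $\omega_0$ there is $\delta>0$ such that $t\in I$, $|t-\omega_0|<\delta$ imply $|f(t)-f(\omega_0)|<\varepsilon$; choose $N\in\mathbb{N}$ with $q^{2N+1}M<\delta$. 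Then for every $n\ge N$ and every $s\in[a,b]$ we have $\bigl|\sigma^{2n+1}(s)-\omega_0\bigr|\le q^{2n+1}M\le q^{2N+1}M<\delta$, hence $\bigl|f^{\sigma^{2n+1}}(s)-f(\omega_0)\bigr|<\varepsilon$. As $N$ depends only on $\varepsilon$ and not on $s$, this is exactly the claimed uniform convergence of $\bigl(f^{\sigma^{2n+1}}(s)\bigr)_{n}$ to $f(\omega_0)$ for $s\in[a,b]$.

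There is no serious obstacle here: the whole point is the elementary identity $\sigma^{k}(t)-\omega_0=q^{k}(t-\omega_0)$, which upgrades Lemma~\ref{lemma:2.2} to a uniform geometric contraction toward $\omega_0$ on the bounded interval $[a,b]$. The only two places that need a little care are (i) verifying that each iterate $\sigma^{2n+1}(s)$ stays inside $I$, so that the composition $f^{\sigma^{2n+1}}$ makes sense, and (ii) extracting the threshold $N$ from the $s$-independent majorant $q^{2n+1}M$ rather than from $q^{2n+1}|s-\omega_0|$ — it is precisely this choice that turns pointwise convergence into uniform convergence.
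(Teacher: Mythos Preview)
Your argument is correct: the key identity $\sigma^{k}(t)-\omega_0=q^{k}(t-\omega_0)$ gives a uniform geometric contraction toward $\omega_0$ on the bounded interval $[a,b]$, and feeding this into the $\varepsilon$--$\delta$ definition of continuity at $\omega_0$ yields uniform convergence exactly as you wrote. The two points you flagged (that $\sigma^{2n+1}(s)\in I$ and that the threshold $N$ must come from the $s$-independent bound $q^{2n+1}M$) are handled properly.

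As for comparison: the paper does not actually supply a proof of this lemma --- it is stated with a citation to Aldwoah's thesis (hence the ``cf.\ \cite{Aldwoah}'' in the header) and is used as a black box for Corollary~\ref{corolario integral} and Theorem~\ref{Fundamental}. So there is no in-paper argument to compare against; your proof is the standard one and would be what any reader reconstructs.
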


The next result tell us that if a function $f$ is continuous
at $\omega_{0}$, then $f$ is Hahn's symmetric integrable.

\begin{corollary}[cf. \cite{Aldwoah}]
\label{corolario integral}
Let $a,b\in I$, $a<b$, and $f:I\rightarrow \mathbb{R}$ be continuous
at $\omega_{0}$. Then, for $s\in \left[ a,b\right]$, the series
$\sum_{n=0}^{+\infty }q^{2n+1}f^{\sigma ^{2n+1}}\left( s\right)$
is uniformly convergent on $I$.
\end{corollary}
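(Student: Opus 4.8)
The plan is to read off the uniform convergence of the series from Lemma~\ref{lema integral} by the Weierstrass $M$-test. By Lemma~\ref{lema integral}, the functions $s\mapsto f^{\sigma^{2n+1}}(s)$ converge uniformly on $[a,b]$ to the constant function $f(\omega_0)$; in particular this sequence is uniformly bounded, so there is a constant $M>0$ with $\bigl|f^{\sigma^{2n+1}}(s)\bigr|\le M$ for all $n\in\mathbb{N}_0$ and all $s\in[a,b]$. Since $0<q<1$, for every such $n$ and $s$ one then has $\bigl|q^{2n+1}f^{\sigma^{2n+1}}(s)\bigr|\le M q^{2n+1}$, and the numerical series $\sum_{n=0}^{\infty} M q^{2n+1}=Mq/(1-q^2)$ converges. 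The Weierstrass $M$-test thus yields uniform convergence of $\sum_{n=0}^{\infty} q^{2n+1} f^{\sigma^{2n+1}}(s)$ for $s\in[a,b]$, which is the asserted conclusion (if a statement on all of $I$ is wanted, the same argument applies verbatim on each subinterval $[a',b']$ with $a',b'\in I$).

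The only subtle point — and the place where I expect to have to be careful — is the uniform bound $M$. Uniform convergence of $\bigl(f^{\sigma^{2n+1}}\bigr)_n$ immediately controls every term with index $n$ large enough that Lemma~\ref{lema integral} forces $\bigl|f^{\sigma^{2n+1}}(s)-f(\omega_0)\bigr|<1$ uniformly in $s$, but this leaves finitely many iterates $f^{\sigma},f^{\sigma^{3}},\dots$ whose boundedness on $[a,b]$ is not granted by mere continuity of $f$ at $\omega_0$. To avoid assuming anything extra about those initial iterates, I would instead finish via the Cauchy criterion for uniform convergence of a series of functions: choosing $N$ so that $\bigl|f^{\sigma^{2n+1}}(s)-f(\omega_0)\bigr|<1$ for all $n\ge N$ and all $s$, one gets, for $m>k\ge N$,
\begin{equation*}
\Bigl|\sum_{n=k+1}^{m} q^{2n+1} f^{\sigma^{2n+1}}(s)\Bigr|
\le \bigl(|f(\omega_0)|+1\bigr)\sum_{n=k+1}^{\infty} q^{2n+1}
= \bigl(|f(\omega_0)|+1\bigr)\,\frac{q^{2k+3}}{1-q^2},
\end{equation*}
and the right-hand side tends to $0$ as $k\to\infty$, independently of $s\in[a,b]$. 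Hence the partial sums are uniformly Cauchy on $[a,b]$, so the series converges uniformly there, which is exactly the claim.
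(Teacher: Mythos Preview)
Your argument is correct, and the care you take with the finitely many initial iterates is well placed: since $f$ is only assumed continuous at $\omega_0$, nothing prevents $f^{\sigma},f^{\sigma^3},\dots,f^{\sigma^{2N-1}}$ from being unbounded on $[a,b]$, so the naive Weierstrass $M$-test is not quite available, whereas your uniform Cauchy estimate on the tail goes through cleanly.

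As for comparison with the paper: there is nothing to compare against. The paper does not supply its own proof of this corollary; it is stated with a reference to Aldwoah and placed immediately after Lemma~\ref{lema integral} precisely so that the reader infers the obvious deduction you have written out. Your derivation is exactly the intended one --- use the uniform convergence from Lemma~\ref{lema integral} to control the tail, then sum the geometric factor $q^{2n+1}$ --- and your Cauchy-criterion refinement is the right way to make it rigorous under the minimal hypothesis that $f$ is merely continuous at $\omega_0$.
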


\begin{theorem}[Fundamental theorem of the Hahn symmetric integral calculus]
\label{Fundamental}
Assume that $f:I\rightarrow \mathbb{R}$ is continuous
at $\omega _{0}$ and, for each $x\in I$, define
\begin{equation*}
F(x):=\int_{\omega _{0}}^{x}f\left( t\right) \tilde{d}_{q,\omega }t\text{.}
\end{equation*}
Then $F$ is continuous at $\omega_{0}$. Furthermore, $\tilde{D}_{q,\omega}[F](x)$
exists for every $x\in I^{q,\omega }$ with $\tilde{D}_{q,\omega}[F](x)=f(x)$.
Conversely,
\begin{equation*}
\int_{a}^{b}\tilde{D}_{q,\omega }\left[ f\right] \left( t\right)
\tilde{d}_{q,\omega }t=f\left( b\right) -f\left( a\right)
\end{equation*}
for all $a,b\in I$.
\end{theorem}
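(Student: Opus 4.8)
The plan is to establish the three assertions in turn and, within each, to treat the distinguished point $\omega_0$ separately from the generic points of $I^{q,\omega}$. The algebraic fact I would record at the outset is that, since $\sigma(x)-\omega_0=q(x-\omega_0)$ and $\sigma^{-1}(x)-\omega_0=q^{-1}(x-\omega_0)$, one has $\sigma^{-1}(x)-\sigma(x)=(q^{-1}-q)(x-\omega_0)$; together with Lemma~\ref{q^n} this controls all the prefactors $\sigma^{k+1}(x)-\sigma^{k-1}(x)$ that will appear below.

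Continuity of $F$ at $\omega_0$: here $F(\omega_0)=0$, and by the preceding identity $F(x)=(q^{-1}-q)(x-\omega_0)\sum_{n\ge0}q^{2n+1}f^{\sigma^{2n+1}}(x)$. Since $f$ is continuous at $\omega_0$ it is bounded, say $|f|\le M$, on some interval neighbourhood $U$ of $\omega_0$; by Lemma~\ref{lemma:2.2} every $\sigma^{2n+1}(x)$ lies between $\omega_0$ and $x$, hence stays in $U$ for $x\in U$, so on $U$ the series is bounded in modulus by $M\sum_{n\ge0}q^{2n+1}=Mq/(1-q^2)$. Consequently $|F(x)|\le (q^{-1}-q)|x-\omega_0|\cdot Mq/(1-q^2)\to0$ as $x\to\omega_0$.

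The identity $\tilde{D}_{q,\omega}[F](x)=f(x)$: for a generic $x\in I^{q,\omega}\setminus\{\omega_0\}$ I would write $\tilde{D}_{q,\omega}[F](x)=(F(\sigma(x))-F(\sigma^{-1}(x)))/(\sigma(x)-\sigma^{-1}(x))$, expand both $F(\sigma(x))$ and $F(\sigma^{-1}(x))$ via the series in the definition of the Hahn symmetric integral, simplify the arguments using $\sigma^{-1}\circ\sigma=\sigma\circ\sigma^{-1}=\mathrm{id}$, and rewrite the two prefactors as $x-\sigma^2(x)=-q(\sigma(x)-\sigma^{-1}(x))$ (Lemma~\ref{q^n}) and $\sigma^{-2}(x)-x=-q^{-1}(\sigma(x)-\sigma^{-1}(x))$ (same type of identity). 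After reindexing one gets $F(\sigma(x))=-(\sigma(x)-\sigma^{-1}(x))\sum_{m\ge1}q^{2m}f^{\sigma^{2m}}(x)$ and $F(\sigma^{-1}(x))=-(\sigma(x)-\sigma^{-1}(x))\sum_{m\ge0}q^{2m}f^{\sigma^{2m}}(x)$, whose difference telescopes to $(\sigma(x)-\sigma^{-1}(x))f(x)$, i.e.\ $\tilde{D}_{q,\omega}[F](x)=f(x)$. At $x=\omega_0$ I must instead check that $F$ is differentiable there in the classical sense with $F'(\omega_0)=f(\omega_0)$: by the identity of the first paragraph, $(F(x)-F(\omega_0))/(x-\omega_0)=(q^{-1}-q)\sum_{n\ge0}q^{2n+1}f^{\sigma^{2n+1}}(x)$, and I would pass to the limit $x\to\omega_0$ termwise — legitimate since this series converges uniformly near $\omega_0$ by Corollary~\ref{corolario integral} and each summand is continuous at $\omega_0$ (composition with $\sigma^{2n+1}$) — obtaining $(q^{-1}-q)f(\omega_0)\sum_{n\ge0}q^{2n+1}=(q^{-1}-q)\cdot q/(1-q^2)\cdot f(\omega_0)=f(\omega_0)$.

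For the Newton--Leibniz formula, by the definition of $\int_a^b$ it suffices to prove $\int_{\omega_0}^{x}\tilde{D}_{q,\omega}[f](t)\,\tilde{d}_{q,\omega}t=f(x)-f(\omega_0)$ for every $x\in I$; the case $x=\omega_0$ is trivial, and for $x\ne\omega_0$ I would evaluate $\tilde{D}_{q,\omega}[f]$ at $\sigma^{2n+1}(x)$, where Lemma~\ref{q^n} gives $\sigma^{2n+2}(x)-\sigma^{2n}(x)=q^{2n+1}(\sigma(x)-\sigma^{-1}(x))$, so that $q^{2n+1}\tilde{D}_{q,\omega}[f](\sigma^{2n+1}(x))=(f^{\sigma^{2n+2}}(x)-f^{\sigma^{2n}}(x))/(\sigma(x)-\sigma^{-1}(x))$. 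The partial sums then telescope to $(f^{\sigma^{2N+2}}(x)-f(x))/(\sigma(x)-\sigma^{-1}(x))$, and since $\sigma^{2N+2}(x)\to\omega_0$ and $f$ is continuous at $\omega_0$ (which in particular makes the series converge, so $\tilde{D}_{q,\omega}[f]$ is Hahn symmetric integrable) this tends to $(f(\omega_0)-f(x))/(\sigma(x)-\sigma^{-1}(x))$; multiplying by the prefactor $\sigma^{-1}(x)-\sigma(x)$ yields $f(x)-f(\omega_0)$, whence $\int_a^b\tilde{D}_{q,\omega}[f]\,\tilde{d}_{q,\omega}t=(f(b)-f(\omega_0))-(f(a)-f(\omega_0))=f(b)-f(a)$. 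I expect the reindexing and the matching of prefactors through Lemma~\ref{q^n} to be the most error-prone part, while the one genuinely delicate step is the termwise passage to the limit at $x=\omega_0$ that gives classical differentiability of $F$ there, for which the uniform convergence from Corollary~\ref{corolario integral} is indispensable.
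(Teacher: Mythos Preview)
Your proposal is correct and follows essentially the same route as the paper: expand $F(\sigma(x))$ and $F(\sigma^{-1}(x))$ via the defining series, use Lemma~\ref{q^n} (and its obvious negative-index analogue, which your opening identity $\sigma^{k}(x)-\omega_0=q^{k}(x-\omega_0)$ covers) to convert the prefactors to a common factor $\sigma(x)-\sigma^{-1}(x)$, telescope, and handle $x=\omega_0$ by a direct limit using the uniform convergence of Corollary~\ref{corolario integral}. The only presentational difference is that the paper dispatches continuity of $F$ at $\omega_0$ in one line by citing Corollary~\ref{corolario integral}, whereas you give an explicit bound; both are fine.
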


\begin{proof}
We note that function $F$ is continuous at $\omega _{0}$
by Corollary~\ref{corolario integral}. Let us begin by
considering $x\in I\backslash \{\omega _{0}\}$. Then,
\begin{align*}
\tilde{D}_{q,\omega }&\left[ \tau \mapsto \int_{0}^{\tau}
f\left( t\right) \tilde{d}_{q,\omega }t\right](x)\\
&= \frac{\int_{\omega _{0}}^{\sigma \left( x\right)}
f\left( t\right) \tilde{d}_{q,\omega }t
-\int_{\omega _{0}}^{\sigma^{-1}\left( x\right)}
f\left( t\right) \tilde{d}_{q,\omega }t}{\sigma \left(x\right)
-\sigma^{-1}\left( x\right) } \\
&= \frac{1}{\sigma \left( x\right) -\sigma ^{-1}\left( x\right) }\bigg\{\left[
\sigma ^{-1}\left( \sigma \left( x\right) \right)
-\sigma\left( \sigma \left( x\right) \right) \right] \sum_{n=0}^{+\infty }q^{2n+1}
f^{\sigma^{2n+1}}\left( \sigma \left( x\right) \right)  \\
& \qquad -\left[ \sigma^{-1}\left( \sigma ^{-1}\left( x\right) \right)
-\sigma \left( \sigma ^{-1}\left( x\right) \right) \right]
\sum_{n=0}^{+\infty} q^{2n+1}f^{\sigma ^{2n+1}}\left(
\sigma ^{-1}\left( x\right) \right) \bigg\}\\
& =\sum_{n=0}^{+\infty }q^{2n}f^{\sigma ^{2n}}\left( x\right)
-\sum_{n=0}^{+\infty }q^{2n+2}f^{\sigma ^{2n+2}}\left(x\right)\\
&= \sum_{n=0}^{+\infty }\bigg{(}q^{2n}f^{\sigma ^{2n}}\left( x\right)
-q^{2(n+1)}f^{\sigma ^{2(n+1)}}\left( x\right)\bigg{)}\\ & =f\left( x\right)\text{.}
\end{align*}
If $x=\omega _{0}$, then
\begin{align*}
\tilde{D}&_{q,\omega }\left[F\right]\left( \omega _{0}\right)\\
&= \lim_{h\rightarrow 0} \frac{F\left( \omega _{0}+h\right)
-F\left( \omega _{0}\right) }{h} \\
&=\lim_{h\rightarrow 0}\frac{1}{h}\left[ \sigma ^{-1}\left( \omega _{0}+h\right)
-\sigma \left( \omega _{0}+h\right) \right] \sum_{n=0}^{+ \infty}
q^{2n+1}f^{\sigma ^{2n+1}}\left( \omega _{0}+h\right)  \\
&= \lim_{h\rightarrow 0}\frac{1}{h}\left[q^{-1}\left(\omega_{0}
+h-\omega\right)-q\left(\omega_{0}+h\right)-\omega\right]\sum_{n=0}^{+ \infty }q^{2n+1}
f^{\sigma ^{2n+1}}\left( \omega _{0}+h\right)  \\
&= \lim_{h\rightarrow 0}\frac{1}{h}\left[\left(q^{-1}-q\right)\omega_{0}
+\left(-q^{-1}-1\right)\omega+\left(q^{-1}-q\right)h\right]
\sum_{n=0}^{+\infty } q^{2n+1}f^{\sigma^{2n+1}}\left( \omega _{0}+h\right)\\
&= \lim_{h\rightarrow 0}\frac{1}{h}\left[\frac{\left(q^{-1}-q\right)\omega}{1-q}
+\left(-q^{-1}-1\right)\omega+\left(q^{-1}-q\right)h\right]\sum_{n=0}^{+\infty}
q^{2n+1}f^{\sigma ^{2n+1}}\left( \omega _{0}+h\right)  \\
&= \lim_{h\rightarrow 0}\frac{1}{h}\left[\left(\frac{1+q}{q}
+\frac{-1-q}{q}\right)\omega+\left(q^{-1}-q\right)h\right]
\sum_{n=0}^{+\infty} q^{2n+1}f^{\sigma ^{2n+1}}\left( \omega_{0}+h\right)\\
&= \lim_{h\rightarrow 0}\frac{1-q^{2}}{q}\sum_{n=0}^{+\infty}
q^{2n+1}f^{\sigma ^{2n+1}}\left( \omega _{0}+h\right)  \\
&= \left( 1-q^{2}\right) \sum_{n=0}^{+\infty }q^{2n}
f\left( \omega_{0}\right)  \\
&= \left( 1-q^{2}\right) \frac{1}{1-q^{2}}f\left( \omega _{0}\right)\\
&= f\left(\omega_{0}\right) .
\end{align*}
Finally, since for $x\in I\backslash \{\omega_{0}\}$ we have
\begin{equation*}
\begin{split}
\int_{\omega _{0}}^{x}  \tilde{D}_{q,\omega }\left[ f\right] \left( t\right)
& \tilde{d}_{q,\omega }t
= \left[ \sigma ^{-1}\left( x\right)
-\sigma \left( x\right)\right] \sum_{n=0}^{+\infty }q^{2n+1}
\tilde{D}_{q,\omega }\left[ f \right]^{\sigma ^{2n+1}}\left( x\right)  \\
&= \left[ \sigma ^{-1}\left( x\right) -\sigma\left( x\right) \right]
\sum_{n=0}^{+\infty }q^{2n+1}\frac{f^{\sigma }\left( \sigma^{2n+1}\left( x\right) \right)
-f^{\sigma ^{-1}}\left( \sigma ^{2n+1}\left(x\right) \right) }{\sigma \left(
\sigma^{2n+1}\left( x\right) \right) -\sigma ^{-1}\left( \sigma ^{2n+1}\left( x\right) \right) } \\
&= \left[ \sigma ^{-1}\left(x\right) -\sigma \left( x\right) \right]
\sum_{n=0}^{+\infty }q^{2n+1}\frac{f^{\sigma }\left( \sigma ^{2n+1}\left( x\right) \right)
-f^{\sigma ^{-1}}\left( \sigma^{2n+1}\left( x\right) \right) }{q^{2n+1}\left(\sigma \left(x\right)
- \sigma ^{-1}\left( x\right)\right) }\\ & =\sum_{n=0}^{+\infty }\left[ f^{\sigma ^{2n}}\left(x\right)
-f^{\sigma^{2\left( n+1\right) }}\left( x\right) \right]  \\
&= f\left( x\right) -f\left( \omega _{0}\right),
\end{split}
\end{equation*}
where in the third equality we use Lemma~\ref{q^n}, then
\begin{align*}
\int_{a}^{b}\tilde{D}_{q,\omega }\left[ f\right] \left( t\right)
\tilde{d}_{q,\omega }t& =\int_{\omega_{0}}^{b}\tilde{D}_{q,\omega }\left[ f\right] \left(
t\right) \tilde{d}_{q,\omega }t-\int_{\omega_{0}}^{a}\tilde{D}_{q,\omega }\left[
f \right] \left( t\right) \tilde{d}_{q,\omega }t \\ & =f\left( b\right)
-f\left(a\right) \text{.}
\end{align*}
\end{proof}

The Hahn symmetric integral has the following properties.

\begin{theorem}
Let $f,g$ $:I\rightarrow \mathbb{R}$ be Hahn's symmetric integrable
on $I$, $a,b,c\in I$, and $\alpha ,\beta \in \mathbb{R}$. Then,
\begin{enumerate}
\item $\int_{a}^{a}f\left( t\right) \tilde{d}_{q,\omega}t=0$;

\item $\int_{a}^{b}f\left( t\right) \tilde{d}_{q,\omega}t
=-\int_{b}^{a}f \left( t\right) \tilde{d}_{q,\omega}t$;

\item $\int_{a}^{b}f\left( t\right) \tilde{d}_{q,\omega}t
=\int_{a}^{c} f\left(t\right) \tilde{d}_{q,\omega}t
+\int_{c}^{b}f\left( t\right)\tilde{d}_{q,\omega}t$;

\item $\int_{a}^{b}\left( \alpha f
+\beta g\right) \left( t\right) \tilde{d}_{q,\omega}t
=\alpha\int_{a}^{b}f\left( t\right) \tilde{d}_{q,\omega}t
+\beta \int_{a}^{b}g\left( t\right) \tilde{d}_{q,\omega}t$;

\item if $\tilde{D}_{q,\omega }\left[ f\right] $ and
$\tilde{D}_{q,\omega } \left[ g\right]$
are continuous at $\omega _{0}$, then
\begin{equation}
\label{eq:int:parts}
\int_{a}^{b}f^{\sigma^{-1}}\left( t\right) \tilde{D}_{q,\omega }\left[
g \right] \left( t\right) \tilde{d}_{q,\omega }t=f\left( t\right)
g\left( t\right) \bigg|_{a}^{b}-\int_{a}^{b}\tilde{D}_{q,\omega }\left[
f\right] \left( t\right) g^{\sigma }\left( t\right) \tilde{d}_{q,\omega }t.
\end{equation}
\end{enumerate}
\end{theorem}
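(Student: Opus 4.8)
The plan is to read off properties (1)--(4) directly from the definition of the Hahn symmetric integral, and to obtain the integration by parts formula (5) by combining the product rule of Theorem~\ref{props derivada} with the fundamental theorem of Hahn's symmetric integral calculus (Theorem~\ref{Fundamental}).

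For (1)--(4), I would introduce the shorthand $G(x):=\int_{\omega_0}^{x}f(t)\,\tilde{d}_{q,\omega}t=\bigl(\sigma^{-1}(x)-\sigma(x)\bigr)\sum_{n=0}^{+\infty}q^{2n+1}f^{\sigma^{2n+1}}(x)$, so that by definition $\int_a^b f(t)\,\tilde{d}_{q,\omega}t=G(b)-G(a)$. Then (1) is $G(a)-G(a)=0$; (2) is $G(b)-G(a)=-\bigl(G(a)-G(b)\bigr)$; (3) is the telescoping identity $\bigl(G(c)-G(a)\bigr)+\bigl(G(b)-G(c)\bigr)=G(b)-G(a)$; and (4) follows because the series defining $G$ depends linearly on $f$, so that $\int_{\omega_0}^{x}(\alpha f+\beta g)=\alpha\int_{\omega_0}^{x}f+\beta\int_{\omega_0}^{x}g$ (the relevant series converging since $f$ and $g$ are assumed integrable), and one then subtracts the values at $a$. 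These are purely formal manipulations.

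The content is in (5). Starting from Theorem~\ref{props derivada}(2), namely $\tilde{D}_{q,\omega}[fg](t)=\tilde{D}_{q,\omega}[f](t)\,g^{\sigma}(t)+f^{\sigma^{-1}}(t)\,\tilde{D}_{q,\omega}[g](t)$, I would rearrange to isolate $f^{\sigma^{-1}}(t)\,\tilde{D}_{q,\omega}[g](t)$ and integrate both sides over $[a,b]$ using the linearity just established in (4). The fundamental theorem (Theorem~\ref{Fundamental}) evaluates $\int_a^b\tilde{D}_{q,\omega}[fg](t)\,\tilde{d}_{q,\omega}t=f(b)g(b)-f(a)g(a)$, which is exactly the boundary term $f(t)g(t)\big|_a^b$, and rearranging yields \eqref{eq:int:parts}.

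The only delicate point --- and the one I expect to require the most care --- is to check that every integral written in the argument is well defined. By Corollary~\ref{corolario integral} it is enough that each integrand be continuous at $\omega_0$. Since $f$ and $g$ are Hahn symmetric differentiable on $I$, they are in particular differentiable, hence continuous, at $\omega_0$, and therefore $f^{\sigma^{-1}}$ and $g^{\sigma}$ are continuous at $\omega_0$ as well (composition with $\sigma$, which is continuous and fixes $\omega_0$). Together with the hypothesis that $\tilde{D}_{q,\omega}[f]$ and $\tilde{D}_{q,\omega}[g]$ are continuous at $\omega_0$, this shows that $f^{\sigma^{-1}}\tilde{D}_{q,\omega}[g]$ and $\tilde{D}_{q,\omega}[f]\,g^{\sigma}$ are continuous at $\omega_0$; the product rule then forces $\tilde{D}_{q,\omega}[fg]$ to be continuous at $\omega_0$ too. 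Hence all three integrals in \eqref{eq:int:parts}, as well as $\int_a^b\tilde{D}_{q,\omega}[fg]$, exist, and the computation above is legitimate.
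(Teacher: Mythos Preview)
Your proposal is correct and follows essentially the same route as the paper: properties (1)--(4) are dismissed as trivial consequences of the definition, and (5) is obtained by rearranging the product rule of Theorem~\ref{props derivada}(2) and applying the fundamental theorem (Theorem~\ref{Fundamental}). Your additional paragraph verifying continuity at $\omega_0$ so that Corollary~\ref{corolario integral} ensures all integrals exist is extra care that the paper omits, but the underlying argument is the same.
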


\begin{proof}
Properties 1 to 4 are trivial. Property~5 follows from
Theorem~\ref{props derivada} and Theorem~\ref{Fundamental}: since
$$
\tilde{D}_{q,\omega }\left[ fg\right] \left( t\right)
=\tilde{D}_{q,\omega}\left[ f\right] \left( t\right)
g^{\sigma }\left( t\right) +f^{\sigma^{-1}}\left(
t\right) \tilde{D}_{q,\omega }\left[ g\right] \left( t\right),
$$
then
$$
f^{\sigma ^{-1}}\left( t\right) \tilde{D}_{q,\omega }\left[ g \right] \left( t\right)
=\tilde{D}_{q,\omega }\left[ fg\right] \left( t\right)
-\tilde{D}_{q,\omega }\left[ f\right] \left( t\right)
g^{\sigma}\left( t\right)
$$
and hence,
$$
\int_{a}^{b}f^{\sigma ^{-1}}\left( t\right) \tilde{D}_{q,\omega }\left[
g\right] \left( t\right) \tilde{d}_{q,\omega }t=f\left( t\right) g\left( t\right)
\bigg|_{a}^{b}-\int_{a}^{b}\tilde{D}_{q,\omega } \left[ f\right] \left( t\right)
g^{\sigma }\left( t\right) \tilde{d}_{q,\omega }t.
$$
\end{proof}

\begin{remark}
Relation \eqref{eq:int:parts} gives a Hahn's
symmetric integration by parts formula.
\end{remark}

\begin{remark}
Using Lemma~\ref{composta} and the Hahn symmetric integration
by parts formula \eqref{eq:int:parts}, we conclude that
\begin{equation}
\label{partes}
\int_{a}^{b} f\left( t\right) \tilde{D}_{q,\omega }\left[ g\right] \left( t\right)
\tilde{d}_{q,\omega }t=f^{\sigma }\left( t\right) g\left(t\right)
\bigg|_{a}^{b}-q\int_{a}^{b}\left( \tilde{D}_{q,\omega }\left[
f\right] \right)^{\sigma }\left( t\right) g^{\sigma }\left( t\right)
\tilde{d}_{q,\omega}t.
\end{equation}
\end{remark}

\begin{proposition}
\label{desigualdade}
Let $c\in I$, $f$ and $g$ be Hahn's symmetric integrable on $I$.
Suppose that $\left\vert f\left( t\right) \right\vert \leq g\left( t\right)$
for all $t\in \left\{ \sigma ^{2n+1}\left( c\right) :
n\in \mathbb{N}_{0}\right\} \cup \left\{ \omega _{0}\right\}$.
\begin{enumerate}
\item If $c\geq \omega _{0}$, then
\begin{equation*}
\left\vert \int_{\omega _{0}}^{c}f\left( t\right)
\tilde{d}_{q,\omega }t\right\vert
\leq \int_{\omega _{0}}^{c}g\left( t\right) \tilde{d}_{q,\omega}t.
\end{equation*}

\item If $c<\omega _{0}$, then
\begin{equation*}
\left\vert \int_{c}^{\omega _{0}}f\left( t\right) \tilde{d}_{q,\omega }t\right\vert
\leq \int_{c}^{\omega _{0}}g\left( t\right) \tilde{d}_{q,\omega }t.
\end{equation*}
\end{enumerate}
\end{proposition}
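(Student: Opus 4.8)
The plan is to reduce everything to the explicit series representation of the Hahn symmetric integral and then apply the ordinary triangle inequality termwise. Recall that for $x \in I$,
\[
\int_{\omega_0}^{x} h(t)\, \tilde{d}_{q,\omega}t = \bigl(\sigma^{-1}(x) - \sigma(x)\bigr) \sum_{n=0}^{+\infty} q^{2n+1} h^{\sigma^{2n+1}}(x),
\]
so the value of the integral from $\omega_0$ to $c$ depends on $h$ only through its values at the points $\sigma^{2n+1}(c)$, $n \in \mathbb{N}_0$ — precisely the set on which the hypothesis $|f| \le g$ is assumed. This is why the hypothesis is stated on exactly that set.

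First I would treat case (1), where $c \ge \omega_0$. By Lemma~\ref{lemma:2.2}, $\sigma(c) \ge \sigma^{-1}(c)$ is false in general; rather, for $c > \omega_0$ one has $\sigma(c) < c < \sigma^{-1}(c)$, so $\sigma^{-1}(c) - \sigma(c) \ge 0$, and each $q^{2n+1} > 0$. Hence the coefficients multiplying the sampled values of $h$ are all nonnegative. Applying the triangle inequality for series (valid since, by Corollary~\ref{corolario integral} or by the comparison with $g$, the relevant series converge) gives
\[
\left| \int_{\omega_0}^{c} f(t)\, \tilde{d}_{q,\omega}t \right|
= \bigl(\sigma^{-1}(c) - \sigma(c)\bigr) \left| \sum_{n=0}^{+\infty} q^{2n+1} f^{\sigma^{2n+1}}(c) \right|
\le \bigl(\sigma^{-1}(c) - \sigma(c)\bigr) \sum_{n=0}^{+\infty} q^{2n+1} \bigl| f^{\sigma^{2n+1}}(c) \bigr|.
\]
Now I invoke the hypothesis: for each $n$, $\sigma^{2n+1}(c)$ lies in $\{\sigma^{2n+1}(c) : n \in \mathbb{N}_0\}$, so $|f^{\sigma^{2n+1}}(c)| = |f(\sigma^{2n+1}(c))| \le g(\sigma^{2n+1}(c)) = g^{\sigma^{2n+1}}(c)$. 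Substituting termwise and pulling the nonnegative factor back out yields $\bigl(\sigma^{-1}(c) - \sigma(c)\bigr) \sum_n q^{2n+1} g^{\sigma^{2n+1}}(c) = \int_{\omega_0}^{c} g(t)\, \tilde{d}_{q,\omega}t$, which is the claim. The edge case $c = \omega_0$ is trivial since both sides vanish (the domain of integration is degenerate), and one uses continuity/the convention at $\omega_0$ together with $|f(\omega_0)| \le g(\omega_0)$.

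For case (2), $c < \omega_0$, by Lemma~\ref{lemma:2.2} the inequalities reverse: $\sigma^{-1}(c) < c < \sigma(c)$, so $\sigma^{-1}(c) - \sigma(c) \le 0$. Thus I would instead write $\int_{c}^{\omega_0} h(t)\,\tilde{d}_{q,\omega}t = -\int_{\omega_0}^{c} h(t)\,\tilde{d}_{q,\omega}t = \bigl(\sigma(c) - \sigma^{-1}(c)\bigr)\sum_{n=0}^{+\infty} q^{2n+1} h^{\sigma^{2n+1}}(c)$, whose leading factor $\sigma(c) - \sigma^{-1}(c)$ is now nonnegative, and repeat the identical termwise triangle-inequality argument. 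I do not anticipate any real obstacle here; the only point requiring minor care is the sign bookkeeping of the factor $\sigma^{-1}(c) - \sigma(c)$ in the two cases — getting that right is exactly why the statement splits into $c \ge \omega_0$ and $c < \omega_0$ — together with a one-line remark that all series in sight converge absolutely (dominated by the convergent series for $g$, which is Hahn symmetric integrable) so that termwise manipulation and the triangle inequality for infinite sums are legitimate.
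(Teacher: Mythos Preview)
Your proposal is correct and follows essentially the same route as the paper's own proof: expand the Hahn symmetric integral via its defining series, observe that the factor $\sigma^{-1}(c)-\sigma(c)$ is nonnegative or nonpositive according as $c\ge\omega_0$ or $c<\omega_0$, apply the triangle inequality termwise, and then use the pointwise hypothesis $|f|\le g$ on the sample points $\sigma^{2n+1}(c)$. Your extra remark on absolute convergence is a harmless addition; otherwise the argument matches the paper's proof line for line.
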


\begin{proof}
If $c\geq \omega_{0}$, then
\begin{equation*}
\begin{split}
\left\vert \int_{\omega _{0}}^{c}f\left( t\right) \tilde{d}_{q,\omega} t\right\vert
&=\left\vert \left[ \sigma ^{-1}\left( c\right) -\sigma \left( c\right)\right]
\sum_{n=0}^{+\infty }q^{2n+1} f^{\sigma^{2n+1}}\left( c\right) \right\vert\\
&\leq \left[ \sigma ^{-1}\left( c\right) -\sigma \left( c\right) \right]
\sum_{n=0}^{+\infty }q^{2n+1}\left\vert f^{\sigma ^{2n+1}}\left( c\right) \right\vert \\
&\leq \left[ \sigma ^{-1}\left( c\right) -\sigma \left( c\right) \right]
\sum_{n=0}^{+\infty }q^{2n+1}g^{\sigma ^{2n+1}}\left( c\right)\\
&=\int_{\omega_{0}}^{c}g\left( t\right) \tilde{d}_{q,\omega }t.
\end{split}
\end{equation*}
If $c<\omega_{0}$, then
\begin{equation*}
\begin{split}
\left\vert \int_{c}^{\omega _{0}} f\left( t\right) \tilde{d}_{q,\omega}t\right\vert
&= \left\vert -\left[ \sigma ^{-1}\left( c\right) -\sigma \left( c\right)\right]
\sum_{n=0}^{+\infty }q^{2n+1} f^{\sigma^{2n+1}}\left( c\right) \right\vert\\
&\leq \left\vert \sigma ^{-1}\left( c\right) -\sigma \left( c\right)
\right\vert \sum_{n=0}^{+\infty }q^{2n+1}\left\vert f^{\sigma ^{2n+1}}\left(
c\right) \right\vert\\
&=-\left[ \sigma ^{-1}\left( c\right) -\sigma \left( c\right)\right]
\sum_{n=0}^{+\infty }q^{2n+1}\left\vert f^{\sigma ^{2n+1}}\left( c\right) \right\vert\\
&\leq -\left[ \sigma ^{-1}\left( c\right) -\sigma \left( c\right)\right]
\sum_{n=0}^{+\infty }q^{2n+1}g^{\sigma ^{2n+1}}\left( c\right)\\
&=-\int_{\omega _{0}}^{c}g\left( t\right) \tilde{d}_{q,\omega }t,\\
&=\int_{c}^{\omega_{0}}g\left( t\right) \tilde{d}_{q,\omega }t.
\end{split}
\end{equation*}
providing the desired equality.
\end{proof}

As an immediate consequence, we have the following result.

\begin{corollary}
Let $c\in I$ and $f$ be Hahn's symmetric integrable on $I$.
Suppose that $f\left( t\right) \geq 0$ for all
$t\in \left\{ \sigma ^{2n+1}\left( c\right) :n\in \mathbb{N}_{0}\right\}
\cup \left\{ \omega _{0}\right\}$.
\begin{enumerate}
\item If $c\geq \omega_{0}$, then
\begin{equation*}
\int_{\omega _{0}}^{c}f\left( t\right) \tilde{d}_{q,\omega }t\geq 0\text{.}
\end{equation*}
\item If $c<\omega_{0}$, then
\begin{equation*}
\int_{c}^{\omega _{0}}f\left( t\right) \tilde{d}_{q,\omega }t\geq 0\text{.}
\end{equation*}
\end{enumerate}
\end{corollary}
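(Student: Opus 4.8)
The plan is to derive this directly from Proposition~\ref{desigualdade} by taking $g=f$. Indeed, the hypothesis $f(t)\geq 0$ for all $t\in\{\sigma^{2n+1}(c):n\in\mathbb{N}_{0}\}\cup\{\omega_{0}\}$ means precisely that $\left\vert f(t)\right\vert=f(t)$ on that set, so in particular $\left\vert f(t)\right\vert\leq f(t)$ there. Thus the pair $(f,g)$ with $g:=f$ satisfies all the hypotheses of Proposition~\ref{desigualdade} (the function $g=f$ is Hahn's symmetric integrable on $I$ by assumption).

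First I would treat the case $c\geq\omega_{0}$: Proposition~\ref{desigualdade}(1) with $g=f$ gives
\begin{equation*}
\left\vert \int_{\omega_{0}}^{c}f(t)\,\tilde{d}_{q,\omega}t\right\vert
\leq \int_{\omega_{0}}^{c}f(t)\,\tilde{d}_{q,\omega}t .
\end{equation*}
Now invoke the elementary real-number fact that $\left\vert x\right\vert\leq x$ forces $x\geq 0$ (since always $x\geq-\left\vert x\right\vert$, and combined with $x\geq\left\vert x\right\vert$ one gets $x\geq 0$); applying it with $x=\int_{\omega_{0}}^{c}f(t)\,\tilde{d}_{q,\omega}t$ yields the claim. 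For the case $c<\omega_{0}$ I would argue identically, using Proposition~\ref{desigualdade}(2) in place of part (1), to obtain $\left\vert \int_{c}^{\omega_{0}}f(t)\,\tilde{d}_{q,\omega}t\right\vert\leq\int_{c}^{\omega_{0}}f(t)\,\tilde{d}_{q,\omega}t$ and hence $\int_{c}^{\omega_{0}}f(t)\,\tilde{d}_{q,\omega}t\geq 0$.

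There is essentially no obstacle here: the statement is labelled as an immediate consequence, and the only nontrivial ingredient — controlling the sign of the series-defined integral against a dominating nonnegative function — has already been packaged into Proposition~\ref{desigualdade}. The proof is therefore a two-line specialization plus the trivial observation about absolute values.
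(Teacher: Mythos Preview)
Your proposal is correct and is exactly the intended argument: the paper states the corollary as ``an immediate consequence'' of Proposition~\ref{desigualdade} and gives no further proof, and your specialization $g:=f$ together with the trivial observation $|x|\leq x\Rightarrow x\geq 0$ is precisely how that immediate consequence is obtained.
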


\begin{remark}
In general it is not true that if $f$ is a nonnegative function
on $\left[ a,b\right]$, then
\begin{equation*}
\int_{a}^{b}f\left( t\right) \tilde{d}_{q,\omega }t\geq 0\text{.}
\end{equation*}
As an example, consider the function $f$ defined in $\left[ -5,5\right]$ by
\begin{equation*}
f\left( x\right) =\left\{
\begin{array}{ccc}
6 & \text{\ \ if \ } & t=3 \\ 1 & \text{\ \ if \ }
& t=4 \\ 0 & \text{\ \ if \ } & t\in \left[ -5,5\right]
\backslash \left\{ 3, 4\right\}.
\end{array}
\right.
\end{equation*}
For $q=\frac{1}{2}$ and $\omega =1$, this function is Hahn's symmetric
integrable because is continuous at $\omega_{0}=2$. However,
\begin{align*}
\int_{4}^{6} & f\left( t\right) \tilde{d}_{q,\omega }t
=\int_{2}^{6}f\left( t\right) \tilde{d}_{q,\omega }t
-\int_{2}^{4} f\left( t\right) \tilde{d}_{q,\omega }t\\
&= \left(10-4\right)\sum_{n=0}^{+\infty }\left( \frac{1}{2}\right)^{2n+1}
f^{\sigma^{2n+1}}\left( 6\right) -\left(6-3\right)
\sum_{n=0}^{+\infty }\left(\frac{1}{2}\right)^{2n+1}
f^{\sigma^{2n+1}}\left( 4\right)\\
&= 6\left( \frac{1}{2}\right) \times 1-3\left( \frac{1}{2}\right)
\times 6\\
&= -6.
\end{align*}
This example also proves that, in general, it is not true that
\begin{equation*}
\left\vert \int_{a}^{b}f\left( t\right) \tilde{d}_{q,\omega }t\right\vert
\leq \int_{a}^{b}\left\vert f\left( t\right) \right\vert \tilde{d}_{q,\omega}t
\end{equation*}
for any $a,b\in I$.
\end{remark}


\section{Hahn's symmetric variational calculus}
\label{M}

We begin this section with some useful definitions and notations.
For $s\in I$ we set
\begin{equation*}
\left[ s\right] _{q,\omega }:=\left\{ \sigma ^{2n+1}\left( s\right)
: n\in \mathbb{N}_{0}\right\} \cup \left\{ \omega _{0}\right\} .
\end{equation*}
Let $a,b\in I$ with $a<b$.
We define the Hahn symmetric interval from $a$ to $b$ by
\begin{equation*}
\left[ a,b\right] _{q,\omega }:=\left\{ \sigma ^{2n+1}\left( a\right)
: n\in \mathbb{N}_{0}\right\} \cup \left\{ \sigma ^{2n+1}\left( b\right)
: n\in \mathbb{N}_{0}\right\} \cup \left\{ \omega _{0}\right\} ,
\end{equation*}
that is,
\begin{equation*}
\left[ a,b\right] _{q,\omega }=\left[ a\right]_{q,\omega }
\cup \left[ b \right]_{q,\omega }.
\end{equation*}
Let $r \in \{0,1\}$. We denote the linear space
\begin{equation*}
\left\{ y:I\rightarrow \mathbb{R}\  |\  \tilde{D}_{q,\omega }^{i}\left[ y\right] ,
i = 0,r, \text{ are bounded on }\left[a,b\right]_{q,\omega } \text{ and continuous at }
\omega _{0}\right\}
\end{equation*}
endowed with the norm
\begin{equation*}
\left\Vert y\right\Vert _{r}=\sum_{i=0}^{r}\sup_{t\in \left[ a,b\right]_{q,\omega }}\left\vert
\tilde{D}_{q,\omega }^{i}\left[ y\right] \left( t\right)\right\vert,
\end{equation*}
where $\tilde{D}_{q,\omega }^{0}\left[ y\right]=y$,
by $\mathcal{Y}^{r}\left( \left[ a,b\right]_{q,\omega }, \mathbb{R}\right)$.

\begin{definition}
We say that $y$ is an admissible function to problem \eqref{P} if
$y\in\mathcal{Y}^{1}\left( \left[ a,b\right] _{q,\omega }, \mathbb{R}\right)$
and $y$ satisfies the boundary conditions $y\left( a\right)=\alpha$
and $y\left( b\right) =\beta$.
\end{definition}

\begin{definition}
We say that $y_{\ast }$ is a local minimizer (resp. local maximizer)
to problem \eqref{P} if $y_{\ast }$ is an admissible function
and there exists $\delta >0$ such that
\begin{equation*}
\mathcal{L}\left( y_{\ast }\right) \leq \mathcal{L}\left( y\right)
\quad \text{ (resp. }\mathcal{L}\left( y_{\ast }\right)
\geq \mathcal{L}\left( y\right)\text{)}
\end{equation*}
for all admissible $y$ with $\left\Vert y_{\ast }
-y\right\Vert _{1}<\delta$.
\end{definition}

\begin{definition}
We say that $\eta \in \mathcal{Y}^{1}\left(\left[ a,b\right]_{q,\omega},
\mathbb{R}\right)$ is an admissible variation to problem \eqref{P}
if $\eta \left(a\right) =0=\eta \left( b\right)$.
\end{definition}

Before proving our main results, we begin with three basic lemmas.


\subsection{Basic Lemmas}

The following results are useful to prove Theorem~\ref{Euler}.

\begin{lemma}[Fundamental lemma of the Hahn symmetric variational calculus]
\label{fundamental}
Let $f\in \mathcal{Y}^{0}\left(\left[ a,b\right]_{q,\omega}, \mathbb{R}\right)$.
One has
\begin{equation*}
\int_{a}^{b}f\left( t\right) h^{\sigma }\left( t\right)
\tilde{d}_{q,\omega}t=0
\end{equation*}
for all $h\in \mathcal{Y}^{0}\left(\left[ a,b\right]_{q,\omega}, \mathbb{R}\right)$
with $h\left( a\right) =h\left( b\right) =0$ if, and only if,
$f\left(t\right) =0$ for all $t\in \left[ a,b\right]_{q,\omega }$.
\end{lemma}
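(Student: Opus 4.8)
The plan is to prove the two implications separately; the ``if'' direction is immediate, and the converse carries all the content. For sufficiency, if $f$ vanishes on $[a,b]_{q,\omega}$ then for any $h\in\mathcal{Y}^{0}([a,b]_{q,\omega},\mathbb{R})$ the product $f\,h^{\sigma}$ is continuous at $\omega_{0}$ (since $f$ and $h$ are, and $h^{\sigma}=h\circ\sigma$), hence Hahn symmetric integrable by Corollary~\ref{corolario integral}. Unfolding the definition of the integral,
\[
\int_{a}^{b}f(t)h^{\sigma}(t)\,\tilde{d}_{q,\omega}t
=\bigl(\sigma^{-1}(b)-\sigma(b)\bigr)\sum_{n=0}^{\infty}q^{2n+1}f(\sigma^{2n+1}(b))\,h(\sigma^{2n+2}(b))
-\bigl(\sigma^{-1}(a)-\sigma(a)\bigr)\sum_{n=0}^{\infty}q^{2n+1}f(\sigma^{2n+1}(a))\,h(\sigma^{2n+2}(a)),
\]
and every factor $f(\sigma^{2n+1}(a))$, $f(\sigma^{2n+1}(b))$ vanishes because $\sigma^{2n+1}(a),\sigma^{2n+1}(b)\in[a,b]_{q,\omega}$. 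So the integral is zero.

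For the ``only if'' direction I would assume the integral vanishes for every admissible $h$ and then feed in test functions supported at a single point so as to isolate one term of the expansion above. Fixing $k\in\mathbb{N}_{0}$, I would show $f(\sigma^{2k+1}(b))=0$ (the case of $\sigma^{2k+1}(a)$ being symmetric, with the roles of $a$ and $b$ swapped). If $b=\omega_{0}$ there is nothing to prove, since then $[b]_{q,\omega}=\{\omega_{0}\}$; assuming $b\neq\omega_{0}$, Lemma~\ref{lemma:2.2} (and a one-line computation) gives $\sigma^{2k+2}(b)\notin\{\omega_{0},a,b\}$ and $\sigma^{-1}(b)\neq\sigma(b)$. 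Then the function $h\colon I\to\mathbb{R}$ defined by $h(\sigma^{2k+2}(b))=1$ and $h(t)=0$ for every other $t$ is bounded, its only point of discontinuity is $\sigma^{2k+2}(b)\neq\omega_{0}$ so it is continuous at $\omega_{0}$, and $h(a)=h(b)=0$; hence $h$ is an admissible test function. Substituting it into the expansion, all terms of both series vanish except the $n=k$ term of the first, which yields
\[
0=\bigl(\sigma^{-1}(b)-\sigma(b)\bigr)\,q^{2k+1}\,f(\sigma^{2k+1}(b)),
\]
whence $f(\sigma^{2k+1}(b))=0$ since $q\in\,]0,1[\,$ and $\sigma^{-1}(b)-\sigma(b)\neq 0$. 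Repeating this for every $k$, and with $a$ in place of $b$, kills $f$ at all the points $\sigma^{2n+1}(a)$ and $\sigma^{2n+1}(b)$. Finally, since $\sigma^{2k+1}(b)\to\omega_{0}$ by Lemma~\ref{lemma:2.2} and $f$ is continuous at $\omega_{0}$, one gets $f(\omega_{0})=\lim_{k\to\infty}f(\sigma^{2k+1}(b))=0$, so $f$ vanishes on all of $[a,b]_{q,\omega}=\{\sigma^{2n+1}(a)\}_{n}\cup\{\sigma^{2n+1}(b)\}_{n}\cup\{\omega_{0}\}$.

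The step I expect to need the most care — though it is not deep — is the construction of the test functions: one must check that each such $h$ genuinely lies in $\mathcal{Y}^{0}([a,b]_{q,\omega},\mathbb{R})$ (boundedness is clear; continuity at $\omega_{0}$ holds because the single discontinuity sits at $\sigma^{2k+2}(b)\neq\omega_{0}$), that it vanishes at the endpoints $a$ and $b$, and that the support point $\sigma^{2k+2}(b)$ does not coincide with one of the points $\sigma^{2n+2}(a)$ appearing in the second series (which is the case once the $\sigma$-orbits of $a$ and $b$ meet only at $\omega_{0}$); this is exactly where $b\neq\omega_{0}$ (resp.\ $a\neq\omega_{0}$) is used. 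Everything else is routine bookkeeping with the series defining the Hahn symmetric integral and the accumulation of $\{\sigma^{2n+1}(\cdot)\}$ at $\omega_{0}$ from Lemma~\ref{lemma:2.2}.
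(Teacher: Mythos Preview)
Your approach is essentially the same as the paper's: both use test functions supported at a single point $\sigma^{2k+2}(\cdot)$ to isolate one term in the series expansion of the integral and force $f$ to vanish there. The only notable difference is that the paper handles $p=\omega_{0}$ as a separate case with its own test-function construction, whereas you obtain $f(\omega_{0})=0$ more cleanly by passing to the limit in $f(\sigma^{2k+1}(b))=0$ via continuity at $\omega_{0}$; both arguments also tacitly rely on the even $\sigma$-orbits of $a$ and $b$ not overlapping, a point you at least flag explicitly.
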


\begin{proof}
The implication \textquotedblleft$\Leftarrow$\textquotedblright\ is obvious.
Let us prove the implication \textquotedblleft$\Rightarrow$\textquotedblright.
Suppose, by contradiction, that exists $p\in\left[ a,b\right]_{q,\omega}$
such that $f\left( p\right) \neq 0$.
\begin{enumerate}
\item If $p\neq\omega_{0}$, then $p=\sigma^{2k+1}\left( a\right)$
or $p=\sigma^{2k+1}\left( b\right)$ for some $k \in \mathbb{N}_{0}$.
\begin{enumerate}
\item Suppose that $a\neq \omega_{0}$ and $b\neq \omega_{0}$.
In this case we can assume, without loss of generality, that
$p=\sigma^{2k+1}\left(a\right)$. Define
\begin{equation*}
h\left( t\right) =
\begin{cases}
f^{\sigma^{2k+1}}\left( a\right)  & \text{if } t=\sigma^{2k+2}\left(a\right)\\
0 & \text{otherwise.}
\end{cases}
\end{equation*}
Then,
\begin{align*}
\int_{a}^{b} & f\left( t\right) h^{\sigma }\left( t\right) \tilde{d}_{q,\omega}t \\
&= \left[ \sigma ^{-1}\left( b\right) -\sigma \left( b\right) \right]
\sum_{n=0}^{+\infty }q^{2n+1}f^{\sigma ^{2n+1}}\left( b\right)
h^{\sigma^{2n+2}}\left( b\right)\\ &\qquad -\left[ \sigma ^{-1}\left( a\right)
-\sigma\left( a\right) \right] \sum_{n=0}^{+\infty }q^{2n+1}
f^{\sigma^{2n+1}}\left( a\right) h^{\sigma^{2n+2}}\left( a\right)\\
&= -\left[ \sigma ^{-1}\left(a\right) -\sigma \left( a\right) \right]
q^{2k+1}\left[ f^{\sigma ^{2k+1}}\left( a\right) \right] ^{2}\neq 0,
\end{align*}
which is a contradiction.

\item Suppose that $a\neq\omega_{0}$ and $b=\omega_{0}$. Therefore,
$p=\sigma^{2k+1}\left( a\right) $ for some $k\in \mathbb{N}_{0}$. Define
\begin{equation*}
h\left( t\right) =
\begin{cases}
f^{\sigma^{2k+1}}\left( a\right) & \text{if } t
=\sigma^{2k+2}\left(a\right)\\ 0 & \text{otherwise.}
\end{cases}
\end{equation*}
We obtain a contradiction with a similar proof as in case (a).

\item The case $a=\omega_{0}$ and $b\neq\omega_{0}$ is similar to (b).
\end{enumerate}

\item If $p=\omega_{0}$, we assume, without loss of generality,
that $f\left(p\right) >0$. Since
\begin{equation*}
\lim_{n\rightarrow +\infty }\sigma^{2k+2}\left( a\right)
=\lim_{n\rightarrow +\infty }\sigma ^{2k+2}\left( b\right)
=\omega_{0}
\end{equation*}
and $f$ is continuous at $\omega_{0}$,
\begin{equation*}
\lim_{n\rightarrow +\infty }f^{\sigma ^{2k+1}}\left( a\right)
=\lim_{n\rightarrow +\infty }f^{\sigma ^{2k+1}}\left( b\right)
=f\left( \omega _{0}\right).
\end{equation*}
Therefore, there exists an order $n_{0}\in \mathbb{N}$
such for all $n>n_{0}$ the inequalities
\begin{equation*}
f^{\sigma ^{2k+1}}\left( a\right) >0 \ \text{ and }\
f^{\sigma^{2k+1}}\left(b\right) >0
\end{equation*}
hold.

\begin{enumerate}
\item If $a,b\neq \omega_{0}$, then for some $k>n_{0}$ we define
\begin{equation*}
h\left( t\right) =
\begin{cases}
-\frac{f^{\sigma ^{2k+1}}\left( b\right)}{\sigma ^{-1}\left( a\right)
-\sigma \left( a\right) } & \text{if } t=\sigma ^{2k+2}\left( a\right)\\
\frac{f^{\sigma ^{2k+1}}\left( a\right) }{\sigma ^{-1}\left( b\right)
-\sigma \left( b\right) } & \text{if } t=\sigma ^{2k+2}\left( b\right)\\ 0
& \text{otherwise.}
\end{cases}
\end{equation*}
Hence,
\begin{equation*}
\int_{a}^{b}f\left( t\right) h^{\sigma }\left( t\right) \tilde{d}_{q,\omega}t
=2q^{2k+1}f^{\sigma ^{2k+1}}\left( a\right) f^{\sigma^{2k+1}}\left(b\right) > 0.
\end{equation*}

\item If $a=\omega_{0}$, then we define
\begin{equation*}
h\left( t\right) =
\begin{cases}
f^{\sigma^{2k+1}}\left( b\right)  & \text{if } t
=\sigma ^{2k+2}\left(b\right)\\ 0 & \text{otherwise.}
\end{cases}
\end{equation*}
Therefore,
\begin{equation*}
\int_{\omega _{0}}^{b}f\left( t\right) h^{\sigma }\left( t\right) \tilde{d}_{q,\omega}t
=\left[ \sigma ^{-1}\left( b\right) -\sigma \left( b\right)\right] q^{2k+1}\left[
f^{\sigma ^{2k+1}}\left( b\right) \right] ^{2}\neq 0.
\end{equation*}
\item If $b=\omega_{0}$, the proof is similar to the previous case.
\end{enumerate}
\end{enumerate}
\end{proof}

\begin{definition}
Let $s\in I$ and $g:I\times \left] -\bar{\theta},\bar{\theta}\right[
\rightarrow \mathbb{R}$. We say that $g\left( t,\cdot \right)$
is differentiable at $\theta_{0}$ uniformly in $\left[ s\right]_{q,\omega }$
if, for every $\varepsilon >0$, there exists $\delta >0$ such that
\begin{equation*}
0<\left\vert \theta -\theta _{0}\right\vert <\delta \Rightarrow \left\vert
\frac{g\left( t,\theta \right) -g\left( t,\theta _{0}\right) }{ \theta
-\theta_{0}}-\partial _{2}g\left( t,\theta _{0}\right) \right\vert < \varepsilon
\end{equation*}
for all $t\in \left[ s\right] _{q,\omega }$,
where $\partial_{2}g =\frac{\partial g}{\partial \theta }$.
\end{definition}

\begin{lemma}[\textrm{cf.} \cite{Malinowska}]
\label{tecnico}
Let $s\in I$ and assume that $g:I\times \left] -\bar{\theta},\bar{\theta}\right[
\rightarrow \mathbb{R}$ is differentiable at $\theta _{0}$ uniformly in
$\left[ s\right]_{q,\omega }$. If $\int_{\omega _{0}}^{s}g\left( t,\theta _{0}\right)
\tilde{d}_{q,\omega }t$ exists, then $G\left( \theta \right) :=\int_{\omega_{0}}^{s}
g\left( t,\theta \right) \tilde{d}_{q,\omega }t$ for $\theta$ near $\theta _{0}$,
is differentiable at $\theta _{0}$ with
\begin{equation*}
G^{\prime }\left( \theta _{0}\right)
=\int_{\omega _{0}}^{s}\partial_{2} g\left( t,\theta _{0}\right) \tilde{d}_{q,\omega }t.
\end{equation*}
\end{lemma}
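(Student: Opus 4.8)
\textbf{Proof proposal for Lemma~\ref{tecnico}.}

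The plan is to reduce the statement to a uniform-convergence argument applied termwise to the series defining the Hahn symmetric integral. First I would write out the definition
\[
G(\theta)=\int_{\omega_0}^{s}g(t,\theta)\,\tilde d_{q,\omega}t
=\bigl(\sigma^{-1}(s)-\sigma(s)\bigr)\sum_{n=0}^{+\infty}q^{2n+1}g\bigl(\sigma^{2n+1}(s),\theta\bigr),
\]
and similarly express $\int_{\omega_0}^{s}\partial_2 g(t,\theta_0)\,\tilde d_{q,\omega}t$ as $(\sigma^{-1}(s)-\sigma(s))\sum_{n}q^{2n+1}\partial_2 g(\sigma^{2n+1}(s),\theta_0)$. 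Then for $\theta\neq\theta_0$ I would form the difference quotient
\[
\frac{G(\theta)-G(\theta_0)}{\theta-\theta_0}-\int_{\omega_0}^{s}\partial_2 g(t,\theta_0)\,\tilde d_{q,\omega}t
=\bigl(\sigma^{-1}(s)-\sigma(s)\bigr)\sum_{n=0}^{+\infty}q^{2n+1}
\left[\frac{g(\sigma^{2n+1}(s),\theta)-g(\sigma^{2n+1}(s),\theta_0)}{\theta-\theta_0}-\partial_2 g(\sigma^{2n+1}(s),\theta_0)\right],
\]
the interchange of the limit $\theta\to\theta_0$ with the (convergent) series being exactly what has to be justified.

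The key step is the uniform differentiability hypothesis: given $\varepsilon>0$, pick $\delta>0$ so that for all $t\in[s]_{q,\omega}$ — in particular for every $t=\sigma^{2n+1}(s)$, $n\in\mathbb{N}_0$ — the bracketed expression above has absolute value less than $\varepsilon$ whenever $0<|\theta-\theta_0|<\delta$. Then I would bound the whole tail-free sum: the absolute value of the right-hand side is at most
\[
\bigl|\sigma^{-1}(s)-\sigma(s)\bigr|\sum_{n=0}^{+\infty}q^{2n+1}\varepsilon
=\bigl|\sigma^{-1}(s)-\sigma(s)\bigr|\,\frac{q}{1-q^2}\,\varepsilon,
\]
a constant multiple of $\varepsilon$ independent of $\theta$. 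Since $\varepsilon$ is arbitrary, this shows the difference quotient of $G$ converges to $\int_{\omega_0}^{s}\partial_2 g(t,\theta_0)\,\tilde d_{q,\omega}t$ as $\theta\to\theta_0$, which is precisely the claim. One small preliminary point to dispatch first is that $G(\theta)$ is well defined (the series converges) for $\theta$ near $\theta_0$: this follows because the termwise bound just derived shows $\sum_n q^{2n+1}g(\sigma^{2n+1}(s),\theta)$ differs from the convergent series $\sum_n q^{2n+1}g(\sigma^{2n+1}(s),\theta_0)$ by an absolutely convergent series, using $|g(\sigma^{2n+1}(s),\theta)-g(\sigma^{2n+1}(s),\theta_0)|\le(|\partial_2 g(\sigma^{2n+1}(s),\theta_0)|+\varepsilon)|\theta-\theta_0|$ together with the assumed existence of $\int_{\omega_0}^{s}\partial_2 g(t,\theta_0)\,\tilde d_{q,\omega}t$.

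The main obstacle is essentially bookkeeping rather than a deep difficulty: one must be careful that the single $\delta$ supplied by the uniform-differentiability definition works simultaneously for all the points $\sigma^{2n+1}(s)$, which is exactly why the hypothesis is phrased "uniformly in $[s]_{q,\omega}$" and why the elementary estimate $\sum_n q^{2n+1}=q/(1-q^2)<\infty$ suffices to absorb the constant $|\sigma^{-1}(s)-\sigma(s)|$. No appeal to dominated convergence or to differentiation under a classical integral sign is needed — the geometric factor $q^{2n+1}$ does all the summability work — so the proof is short once the difference quotient is written in series form.
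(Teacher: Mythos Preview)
Your argument is correct and is essentially the same as the paper's: both fix $\varepsilon$, invoke the uniform-differentiability hypothesis to bound the bracketed difference at every node $\sigma^{2n+1}(s)$, and then sum. The only cosmetic difference is packaging. The paper keeps the estimate in integral form and appeals to Proposition~\ref{desigualdade} to pass from the pointwise bound to an integral bound, rescaling $\varepsilon$ by $1/\bigl(2|s-\omega_0|\bigr)$ so that the final estimate comes out as $\varepsilon/2$; you instead unpack the integral into its defining series and bound termwise, obtaining the constant $\bigl|\sigma^{-1}(s)-\sigma(s)\bigr|\,q/(1-q^2)=|s-\omega_0|$ directly. Your version is marginally more self-contained (it does not need Proposition~\ref{desigualdade}) and handles both signs of $s-\omega_0$ at once via the absolute value, whereas the paper treats $s>\omega_0$ by citing \cite{Malinowska} and writes out only $s<\omega_0$. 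You also explicitly check that $G(\theta)$ is defined for $\theta$ near $\theta_0$, a point the paper leaves implicit.
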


\begin{proof}
For $s>\omega_{0}$ the proof is similar to the proof given in Lemma~3.2 of \cite{Malinowska}.
The result is trivial for $s = \omega_{0}$. Suppose that  $s<\omega_{0}$ and let
$\varepsilon >0$ be arbitrary. Since $g\left( t,\cdot \right)$ is differentiable
at $\theta_{0}$ uniformly in $\left[ s\right] _{q,\omega }$, then there exists $\delta >0$
such that for all $t\in \left[ s\right]_{q,\omega }$ and for $0<\left\vert \theta
-\theta _{0}\right\vert <\delta$ the following inequality holds:
\begin{equation}
\label{treta}
\left\vert \frac{g\left( t,\theta \right) -g\left( t,\theta _{0}\right) }{
\theta -\theta _{0}}-\partial _{2}g\left( t,\theta _{0}\right)
\right\vert < \frac{\varepsilon }{2\left(\omega _{0}- s\right)}.
\end{equation}
Since, for $0<\left\vert \theta -\theta_{0}\right\vert <\delta $, we have
\begin{align*}
\Biggl\vert &\frac{G\left( \theta \right)-G\left( \theta _{0}\right) }{\theta
-\theta _{0}}-\int_{\omega _{0}}^{s}\partial _{2}g\left( t,\theta_{0}\right)
\tilde{d}_{q,\omega }t \Biggr\vert \\ &=\left\vert \frac{\int_{\omega_{0}}^{s}
g\left( t,\theta \right) \tilde{d}_{q,\omega}t -\int_{\omega _{0}}^{s}
g\left(t,\theta _{0}\right) \tilde{d}_{q,\omega }t}{\theta -\theta_{0}}
-\int_{\omega _{0}}^{s}\partial_{2} g\left( t,\theta_{0}\right)
\tilde{d}_{q,\omega }t\right\vert \\
&= \left\vert \int_{\omega _{0}}^{s}\left[ \frac{g\left( t,\theta \right)
- g\left( t,\theta _{0}\right) }{\theta -\theta _{0}}
-\partial_{2}g\left(t,\theta_{0}\right) \right] \tilde{d}_{q,\omega }t\right\vert\\
& < \int_{s}^{\omega _{0}}\frac{\varepsilon }{2\left(\omega_{0}- s\right)} \tilde{d}_{q,\omega }t
\text{\ \ (using Proposition~\ref{desigualdade} and inequality \eqref{treta})}\\
&= \frac{\varepsilon }{2\left(\omega _{0}- s\right) }\int_{s}^{\omega_{0}}1
\tilde{d}_{q,\omega }t =\frac{\varepsilon }{2}<\varepsilon
\end{align*}
then we can conclude that $\displaystyle G^{\prime }\left( \theta \right)
=\int_{\omega _{0}}^{s}\partial _{2}g\left( t,\theta_{0}\right)
\tilde{d}_{q,\omega}t$.
\end{proof}

For an admissible variation $\eta$ and an admissible function $y$,
we define $\phi :\left] -\bar{\epsilon},\bar{\epsilon}\right[ \rightarrow \mathbb{R}$
by $\phi\left( \epsilon \right) := \mathcal{L}\left( y+\epsilon \eta \right)$.
The first variation of functional $\mathcal{L}$ of problem \eqref{P} is defined by
$\delta \mathcal{L}\left( y,\eta \right) :=\phi ^{\prime }\left( 0\right)$. Note that
\begin{align*}
\mathcal{L}\left( y+\epsilon \eta \right) &= \int_{a}^{b}
L\left( t,y^{\sigma}\left( t\right)
+ \epsilon \eta^{\sigma }\left( t\right), \tilde{D}_{q,\omega }\left[
y\right] \left( t\right) +\epsilon \tilde{D}_{q,\omega } \left[ \eta \right]
\left( t\right) \right) \tilde{d}_{q,\omega }t \\
&= \mathcal{L}_{b}\left( y+\epsilon \eta \right)
-\mathcal{L}_{a}\left( y+\epsilon \eta \right) ,
\end{align*}
where
\begin{equation*}
\mathcal{L}_{\xi }\left( y+\epsilon \eta \right)
= \int_{\omega _{0}}^{\xi}L\left( t,y^{\sigma }\left( t\right)
+\epsilon \eta ^{\sigma }\left(t\right), \tilde{D}_{q,\omega }\left[ y\right]\left( t\right)
+\epsilon \tilde{D}_{q,\omega }\left[ \eta \right]\left(t\right)\right)\tilde{d}_{q,\omega } t
\end{equation*}
with $\xi \in \left\{ a,b\right\}$. Therefore, $\delta \mathcal{L}\left( y,\eta \right)
= \delta \mathcal{L}_{b}\left( y,\eta\right) -\delta \mathcal{L}_{a}\left(y,\eta \right)$.

The following lemma is a direct consequence of Lemma~\ref{tecnico}.

\begin{lemma}
\label{tecnico 2}
For an admissible variation $\eta $ and an admissible function $y$, let
\begin{equation*}
g\left( t,\epsilon \right) :=L\left( t,y^{\sigma }\left( t\right)
+\epsilon \eta ^{\sigma }\left( t\right),
\tilde{D}_{q,\omega }\left[ y\right] \left( t\right)
+\epsilon \tilde{D}_{q,\omega }\left[\eta\right]\left( t\right) \right) .
\end{equation*}
Assume that
\begin{enumerate}
\item $g\left( t,\cdot \right) $ is differentiable at $\omega_{0}$
uniformly in $\left[ a,b\right] _{q,\omega }$;

\item $\mathcal{L}_{\xi}\left( y+\epsilon \eta \right)
=\int_{\omega_{0}}^{\xi}g\left( t,\epsilon \right) \tilde{d}_{q,\omega }t$,
$\xi \in \left\{a,b\right\}$, exist for $\epsilon \approx 0$;

\item $\int_{\omega _{0}}^{a}\partial_{2}g\left( t,0\right) \tilde{d}_{q,\omega }t$
and $\int_{\omega_{0}}^{b}\partial_{2}g\left( t,0\right) \tilde{d}_{q,\omega }t$ exist.
\end{enumerate}
Then,
\begin{multline*}
\phi^{\prime }\left( 0\right) :=\delta \mathcal{L}\left( y,\eta \right)
= \int_{a}^{b}\Biggl[\partial _{2}L\left( t,y^{\sigma }\left( t\right),
\tilde{D}_{q,\omega }\left[ y\right] \left( t\right) \right)
\eta^{\sigma }\left(t\right)\\ +\partial _{3} L\left( t,y^{\sigma }\left( t\right),
\tilde{D}_{q,\omega} \left[ y\right] \left( t\right) \right)
\tilde{D}_{q,\omega }\left[\eta\right]\left(t\right) \Biggr]\tilde{d}_{q,\omega }t.
\end{multline*}
\end{lemma}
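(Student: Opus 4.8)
The plan is to obtain the formula by differentiating under the Hahn symmetric integral sign, which is exactly what Lemma~\ref{tecnico} permits. Recall from the discussion preceding the lemma that $\phi(\epsilon) = \mathcal{L}(y+\epsilon\eta) = \mathcal{L}_b(y+\epsilon\eta) - \mathcal{L}_a(y+\epsilon\eta)$, where $\mathcal{L}_\xi(y+\epsilon\eta) = \int_{\omega_0}^{\xi} g(t,\epsilon)\,\tilde{d}_{q,\omega}t$ for $\xi\in\{a,b\}$, and hence $\delta\mathcal{L}(y,\eta) = \delta\mathcal{L}_b(y,\eta) - \delta\mathcal{L}_a(y,\eta)$. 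Since by (H1) the map $(u,v)\mapsto L(t,u,v)$ is of class $C^1$, the chain rule shows that $\epsilon\mapsto g(t,\epsilon)$ is differentiable, with $\partial_2 g(t,\epsilon)$ given by the expression obtained from the right-hand side below by replacing $y^\sigma(t)$ and $\tilde{D}_{q,\omega}[y](t)$ with $y^\sigma(t)+\epsilon\eta^\sigma(t)$ and $\tilde{D}_{q,\omega}[y](t)+\epsilon\tilde{D}_{q,\omega}[\eta](t)$; in particular, at $\epsilon=0$,
\begin{multline*}
\partial_2 g(t,0) = \partial_2 L\bigl(t,y^\sigma(t),\tilde{D}_{q,\omega}[y](t)\bigr)\,\eta^\sigma(t)\\
+ \partial_3 L\bigl(t,y^\sigma(t),\tilde{D}_{q,\omega}[y](t)\bigr)\,\tilde{D}_{q,\omega}[\eta](t).
\end{multline*}

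Next I would apply Lemma~\ref{tecnico} twice, with $\theta_0 = 0$ and with $s = a$ and $s = b$ respectively. Hypothesis (1) gives that $g(t,\cdot)$ is differentiable at $0$ uniformly in $[a,b]_{q,\omega}$, which in particular yields uniform differentiability in $[a]_{q,\omega}$ and in $[b]_{q,\omega}$; hypothesis (2) ensures that $\mathcal{L}_\xi(y+\epsilon\eta)$ exists for $\epsilon$ close to $0$; and hypothesis (3) ensures that $\int_{\omega_0}^a \partial_2 g(t,0)\,\tilde{d}_{q,\omega}t$ and $\int_{\omega_0}^b \partial_2 g(t,0)\,\tilde{d}_{q,\omega}t$ exist. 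Thus all the assumptions of Lemma~\ref{tecnico} are met, and it follows that each $\epsilon\mapsto\mathcal{L}_\xi(y+\epsilon\eta)$ is differentiable at $0$ with
\begin{equation*}
\delta\mathcal{L}_\xi(y,\eta) = \int_{\omega_0}^{\xi}\partial_2 g(t,0)\,\tilde{d}_{q,\omega}t,\qquad \xi\in\{a,b\}.
\end{equation*}

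Finally, combining these two identities,
\begin{equation*}
\phi'(0) = \delta\mathcal{L}(y,\eta) = \int_{\omega_0}^{b}\partial_2 g(t,0)\,\tilde{d}_{q,\omega}t - \int_{\omega_0}^{a}\partial_2 g(t,0)\,\tilde{d}_{q,\omega}t = \int_a^b \partial_2 g(t,0)\,\tilde{d}_{q,\omega}t
\end{equation*}
by the very definition of the Hahn symmetric integral from $a$ to $b$, and substituting the expression for $\partial_2 g(t,0)$ computed in the first paragraph yields exactly the asserted formula. The only genuine point is the legitimacy of differentiating under the integral sign, and this is handled entirely by Lemma~\ref{tecnico} once its hypotheses are matched against (1)--(3); beyond that, the argument is just the chain rule together with the linearity of the Hahn symmetric integral, so I do not foresee any real obstacle.
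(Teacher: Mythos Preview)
Your proof is correct and follows exactly the route the paper indicates: the paper does not give a detailed argument but simply states that the lemma ``is a direct consequence of Lemma~\ref{tecnico}'', and your write-up is precisely the natural unpacking of that claim---apply Lemma~\ref{tecnico} with $\theta_0=0$ at $s=a$ and $s=b$, compute $\partial_2 g(t,0)$ via the chain rule, and subtract. One small remark: hypothesis~(1) in the statement reads ``differentiable at $\omega_0$'', which is evidently a typo for ``differentiable at $0$'' (the relevant point in the $\epsilon$-variable); you silently corrected this, which is the right reading.
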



\subsection{Optimality Conditions}
\label{sec:o:c}

In this section we present a necessary optimality condition
(the Hanh symmetric Euler--Lagrange equation)
and a sufficient optimality condition to problem \eqref{P}.

\begin{theorem}[The Hahn symmetric Euler--Lagrange equation]
\label{Euler}
Under hypotheses (H1)--(H3) and conditions 1 to 3 of Lemma~\ref{tecnico 2}
on the Lagrangian $L$, if $y_{\ast}\in \mathcal{Y}^{1}\left(\left[a,b\right]_{q,\omega},
\mathbb{R}\right)$ is a local extremizer to problem \eqref{P}, then $y_{\ast }$
satisfies the Hahn symmetric Euler--Lagrange equation
\begin{equation}
\label{EqEuler}
\partial_{2}L\left( t,y^{\sigma }\left( t\right), \tilde{D}_{q,\omega}
\left[ y\right] \left( t\right) \right) =\tilde{D}_{q,\omega }\left[\tau
\mapsto \partial_{3}L\left( \sigma\left(\tau \right),
y^{\sigma ^{2}}\left( \tau \right), \left(\tilde{D}_{q,\omega}\left[
y\right] \right)^{\sigma }\left( \tau\right) \right) \right] \left( t\right)
\end{equation}
for all $t\in \left[ a,b\right]_{q,\omega}$.
\end{theorem}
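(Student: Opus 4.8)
The plan is to run the classical first-variation argument, now in the Hahn symmetric setting. Let $y_{\ast}$ be a local extremizer of \eqref{P} and let $\eta\in\mathcal{Y}^{1}\left([a,b]_{q,\omega},\mathbb{R}\right)$ be an arbitrary admissible variation, so $\eta(a)=0=\eta(b)$. Then $y_{\ast}+\epsilon\eta$ is admissible for every $\epsilon$, and since $\left\Vert y_{\ast}-(y_{\ast}+\epsilon\eta)\right\Vert_{1}=|\epsilon|\,\left\Vert\eta\right\Vert_{1}$, the function $\phi(\epsilon):=\mathcal{L}(y_{\ast}+\epsilon\eta)$ has a local extremum at $\epsilon=0$; hence $\phi'(0)=0$, i.e. $\delta\mathcal{L}(y_{\ast},\eta)=0$. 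Because hypotheses (H1)--(H3) together with conditions 1--3 of Lemma~\ref{tecnico 2} are assumed, Lemma~\ref{tecnico 2} applies and yields
\begin{equation*}
0=\int_{a}^{b}\Big[\partial_{2}L\big(t,y_{\ast}^{\sigma}(t),\tilde{D}_{q,\omega}[y_{\ast}](t)\big)\,\eta^{\sigma}(t)+\partial_{3}L\big(t,y_{\ast}^{\sigma}(t),\tilde{D}_{q,\omega}[y_{\ast}](t)\big)\,\tilde{D}_{q,\omega}[\eta](t)\Big]\tilde{d}_{q,\omega}t.
\end{equation*}

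Next I would integrate the second summand by parts. Put $f(t):=\partial_{3}L\big(t,y_{\ast}^{\sigma}(t),\tilde{D}_{q,\omega}[y_{\ast}](t)\big)$; by (H3) one has $f\in\mathcal{Y}^{1}\left([a,b]_{q,\omega},\mathbb{R}\right)$, so $\tilde{D}_{q,\omega}[f]$ is continuous at $\omega_{0}$, and the same holds for $\eta$. Thus formula \eqref{partes} applies with $g=\eta$, and since $\eta(a)=\eta(b)=0$ the boundary term $f^{\sigma}(t)\eta(t)\big|_{a}^{b}$ vanishes, giving
\begin{equation*}
\int_{a}^{b}f(t)\,\tilde{D}_{q,\omega}[\eta](t)\,\tilde{d}_{q,\omega}t=-q\int_{a}^{b}\big(\tilde{D}_{q,\omega}[f]\big)^{\sigma}(t)\,\eta^{\sigma}(t)\,\tilde{d}_{q,\omega}t=-\int_{a}^{b}\tilde{D}_{q,\omega}[f^{\sigma}](t)\,\eta^{\sigma}(t)\,\tilde{d}_{q,\omega}t,
\end{equation*}
where the last equality is Lemma~\ref{composta}. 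Noting $f^{\sigma}(\tau)=\partial_{3}L\big(\sigma(\tau),y_{\ast}^{\sigma^{2}}(\tau),(\tilde{D}_{q,\omega}[y_{\ast}])^{\sigma}(\tau)\big)$ and substituting back, I obtain
\begin{equation*}
0=\int_{a}^{b}\Big[\partial_{2}L\big(t,y_{\ast}^{\sigma}(t),\tilde{D}_{q,\omega}[y_{\ast}](t)\big)-\tilde{D}_{q,\omega}\big[\tau\mapsto\partial_{3}L\big(\sigma(\tau),y_{\ast}^{\sigma^{2}}(\tau),(\tilde{D}_{q,\omega}[y_{\ast}])^{\sigma}(\tau)\big)\big](t)\Big]\eta^{\sigma}(t)\,\tilde{d}_{q,\omega}t
\end{equation*}
for every admissible variation $\eta$. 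The bracketed function lies in $\mathcal{Y}^{0}\left([a,b]_{q,\omega},\mathbb{R}\right)$ (by (H3) for the first term and because the Hahn symmetric derivative of a $\mathcal{Y}^{1}$ function is bounded on $[a,b]_{q,\omega}$ and continuous at $\omega_{0}$ for the second), so the fundamental lemma of the Hahn symmetric variational calculus (Lemma~\ref{fundamental}, applied with $h=\eta$) forces it to vanish for all $t\in[a,b]_{q,\omega}$, which is exactly \eqref{EqEuler}.

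The computations themselves are routine; the points requiring care are (i) checking that (H1)--(H3) and conditions 1--3 indeed guarantee that Lemma~\ref{tecnico 2} applies, that all integrals exist, and that the Hahn symmetric derivatives appearing are continuous at $\omega_{0}$ so that \eqref{partes} is legitimate; (ii) the bookkeeping of the $\sigma$-shifts — using $f^{\sigma}(\tau)=\partial_{3}L\big(\sigma(\tau),y_{\ast}^{\sigma^{2}}(\tau),(\tilde{D}_{q,\omega}[y_{\ast}])^{\sigma}(\tau)\big)$ and Lemma~\ref{composta} — so that the quantity fed into Lemma~\ref{fundamental} is precisely the one appearing in \eqref{EqEuler}; and (iii) verifying that this quantity belongs to $\mathcal{Y}^{0}\left([a,b]_{q,\omega},\mathbb{R}\right)$, which is where (H3) is genuinely needed. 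I expect (ii) to be the main place where sign or argument slips could occur, but no step is deep.
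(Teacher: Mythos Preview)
Your proof is correct and follows essentially the same route as the paper: set $\phi'(0)=0$ via Lemma~\ref{tecnico 2}, integrate the $\partial_{3}L$ term by parts using \eqref{partes}, kill the boundary term with $\eta(a)=\eta(b)=0$, and invoke Lemma~\ref{fundamental} together with Lemma~\ref{composta}. The only cosmetic difference is that you apply Lemma~\ref{composta} \emph{before} invoking the fundamental lemma (rewriting $q(\tilde{D}_{q,\omega}[f])^{\sigma}$ as $\tilde{D}_{q,\omega}[f^{\sigma}]$ inside the integral), whereas the paper applies it afterward; the two orderings are equivalent.
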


\begin{proof}
Let $y_{\ast}$ be a local minimizer (resp. maximizer) to problem \eqref{P}
and $\eta $ an admissible variation. Define $\phi : \mathbb{R} \rightarrow \mathbb{R}$
by $\phi \left( \epsilon \right) :=\mathcal{L}\left( y_{\ast} +\epsilon \eta \right)$.
A necessary condition for $y_{\ast}$ to be an extremizer is given by
$\phi^{\prime }\left( 0\right) =0$. By Lemma~\ref{tecnico 2},
\begin{multline*}
\int_{a}^{b}\Biggl[ \partial_{2}L\left( t,y_{\ast}^{\sigma }\left( t\right),
\tilde{D}_{q,\omega }\left[ y_{\ast}\right] \left( t\right) \right) \eta^{\sigma}\left(t\right) \\
+\partial_{3}L\left( t,y_{\ast}^{\sigma }\left( t\right), \tilde{D}_{q,\omega }\left[
y_{\ast}\right] \left( t\right) \right) \tilde{D}_{q,\omega}\left[\eta\right] \left( t\right)
\Biggr] \tilde{d}_{q,\omega }t =0.
\end{multline*}
Using the integration by parts formula \eqref{partes}, we get
\begin{align*}
\int_{a}^{b} & \partial _{3}L\left( t,y_{\ast}^{\sigma }\left( t\right),
\tilde{D}_{q,\omega }\left[ y_{\ast}\right] \left( t\right) \right)
\tilde{D}_{q,\omega}\left[\eta\right] \left( t\right) \tilde{d}_{q,\omega }t \\
& =\partial _{3}L\left( \sigma \left( t\right), y_{\ast}^{\sigma ^{2}}\left(t\right),
\left( \tilde{D}_{q,\omega }\left[ y_{\ast}\right] \right)^{\sigma}\left( t\right)
\right) \eta \left( t\right) \bigg|_{a}^{b}\\
&\qquad - q\int_{a}^{b}\left( \tilde{D}_{q,\omega }\left[\tau \mapsto \partial_{3}
L\left(\tau ,y_{\ast}^{\sigma }\left( \tau \right),
\left( \tilde{D}_{q,\omega }\left[ y_{\ast}\right] \right) \left( \tau \right)
\right) \right] \right)^{\sigma}\left( t\right) \eta^{\sigma }\left( t\right)
\tilde{d}_{q,\omega }t.
\end{align*}
Since $\eta \left( a\right) =\eta \left( b\right) =0$, then
\begin{multline*}
\int_{a}^{b}\bigg[\partial _{2} L\left( t,y_{\ast}^{\sigma }\left( t\right),
\tilde{D}_{q,\omega }\left[ y_{\ast}\right] \left( t\right) \right)\\
-q\left(\tilde{D}_{q,\omega }\left[ \tau \mapsto \partial_{3}
L\left(\tau, y_{\ast}^{\sigma}\left(\tau\right), \left( \tilde{D}_{q,\omega }\left[
y_{\ast}\right] \right) \left( \tau \right) \right) \right] \right)^{\sigma }\left( t\right)
\bigg] \eta^{\sigma }\left( t\right) \tilde{d}_{q,\omega }t = 0
\end{multline*}
and by Lemma~\ref{fundamental} we get
\begin{eqnarray*}
&\partial _{2}L\left( t,y_{\ast}^{\sigma }\left( t\right),
\tilde{D}_{q,\omega } \left[ y_{\ast}\right] \left( t\right) \right)
=& q\left( \tilde{D}_{q,\omega}\left[ \tau \mapsto \partial _{3}
L\left(\tau, y_{\ast}^{\sigma }\left(\tau\right), \tilde{D}_{q,\omega}\left[
y_{\ast}\right]\left(\tau\right)\right)\right]\right)^{\sigma }\left( t\right)
\end{eqnarray*}
for all $t\in \left[ a,b\right]_{q,\omega}$.
Finally, using Lemma~\ref{composta}, we conclude that
\begin{multline*}
\partial _{2}L\left( t,y_{\ast}^{\sigma }\left( t\right) ,
\tilde{D}_{q,\omega }\left[ y_{\ast}\right] \left( t\right) \right)\\
=\tilde{D}_{q,\omega}\left[ \tau \mapsto \partial_{3}
L\left( \sigma\left(\tau \right), y_{\ast}^{\sigma ^{2}}\left(\tau\right),
\left(\tilde{D}_{q, \omega}\left[y_{\ast}\right]\right)^{\sigma}\left(
\tau\right)\right)\right] \left( t\right).
\end{multline*}
\end{proof}

The particular case $\omega =0$ gives the $q$-symmetric Euler--Lagrange equation.

\begin{corollary}[The $q$-symmetric Euler--Lagrange equation \cite{Cruz2}]
\label{q-Euler}
Let $\omega =0$. Under hypotheses (H1)--(H3) and conditions 1 to 3
of Lemma~\ref{tecnico 2} on the Lagrangian $L$,
if $y_{\ast } \in \mathcal{Y}^{1}\left(\left[ a,b\right]_{q,0}, \mathbb{R}\right)$
is a local extremizer to problem \eqref{P} (with $\omega =0$),
then $y_{\ast }$ satisfies the $q$-symmetric Euler--Lagrange equation
\begin{equation*}
\partial_{2}L\left( t,y\left( qt\right) ,\tilde{D}_{q}\left[ y\right]
\left( t\right) \right) =\tilde{D}_{q}\left[ \tau \mapsto
\partial_{3}L\left( q\tau, y\left( q^{2}\tau \right),
\tilde{D}_{q}\left[ y\right] \left( q\tau\right) \right) \right] \left( t\right)
\end{equation*}
for all $t\in \left[ a,b\right]_{q}$.
\end{corollary}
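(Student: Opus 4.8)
The plan is to obtain Corollary~\ref{q-Euler} as an immediate specialization of Theorem~\ref{Euler} to the case $\omega = 0$. First I would record what the relevant objects become in this case: since $\omega_0 = \omega/(1-q) = 0$, the operator $\sigma$ reduces to $\sigma(t) = qt$, so $\sigma^2(t) = q^2 t$ and $\sigma^{-1}(t) = q^{-1}t$; the Hahn symmetric interval $[a,b]_{q,0}$ is precisely the $q$-symmetric interval $[a,b]_q$; the space $\mathcal{Y}^1([a,b]_{q,0},\mathbb{R})$ is the corresponding $q$-symmetric space; and, by the first Remark after Definition~\ref{def:dhsd}, $\tilde{D}_{q,0} = \tilde{D}_q$. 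Consequently each hypothesis in the statement — (H1)--(H3) and conditions 1 to 3 of Lemma~\ref{tecnico 2} — is exactly the $\omega = 0$ instance of the corresponding hypothesis in Theorem~\ref{Euler}, and problem \eqref{P} with $\omega = 0$ is a legitimate instance of the general problem.

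Next I would apply Theorem~\ref{Euler} to this instance, which yields that a local extremizer $y_\ast$ satisfies \eqref{EqEuler}. It then only remains to rewrite the notation: $y_\ast^{\sigma}(t) = y_\ast(qt)$, $y_\ast^{\sigma^2}(\tau) = y_\ast(q^2\tau)$, $\tilde{D}_{q,\omega}[y_\ast] = \tilde{D}_q[y_\ast]$, and, using Lemma~\ref{composta} with $\omega = 0$ (or directly from the definition of $\tilde{D}_q$), $(\tilde{D}_q[y_\ast])^{\sigma}(\tau) = \tilde{D}_q[y_\ast](q\tau)$. Substituting these identities into \eqref{EqEuler} produces exactly the displayed $q$-symmetric Euler--Lagrange equation, valid for all $t \in [a,b]_q = [a,b]_{q,0}$.

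Since the argument is a pure specialization, there is essentially no obstacle. The only point requiring a line of care is checking that the exceptional value of the operator at the point $\omega_0$ also specializes correctly: here $\omega_0 = 0$, and Definition~\ref{def:dhsd} gives $\tilde{D}_{q,0}[f](0) = f'(0) = \tilde{D}_q[f](0)$, so that nothing in the hypotheses or in equation \eqref{EqEuler} is silently lost when passing to $\omega = 0$. With that remark the proof is complete in a few lines.
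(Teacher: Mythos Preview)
Your proposal is correct and matches the paper's approach: the paper presents Corollary~\ref{q-Euler} simply as the particular case $\omega=0$ of Theorem~\ref{Euler}, without a separate proof, and your argument is precisely this specialization with the notational identifications spelled out. The only superfluous remark is the appeal to Lemma~\ref{composta} for $(\tilde{D}_q[y_\ast])^{\sigma}(\tau)=\tilde{D}_q[y_\ast](q\tau)$, which as you yourself note follows directly from the definition $f^{\sigma}=f\circ\sigma$.
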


To conclude this section, we prove a sufficient optimality condition to \eqref{P}.

\begin{definition}
Given a Lagrangian $L$, we say that $L\left( t,u,v\right)$ is jointly convex
(resp. concave) in $\left( u,v\right)$ if, and only if, $\partial_{i}L$, $i=2,3$,
exist and are continuous and verify the following condition:
\begin{equation*}
L\left( t,u+u_{1},v+v_{1}\right) -L\left( t,u,v\right)
\geq \text{ (resp. $\leq$) } \partial_{2}L\left( t,u,v\right) u_{1}
+\partial _{3}L\left(t,u,v\right) v_{1}
\end{equation*}
for all $\left( t,u,v\right) ,\left( t,u+u_{1},v+v_{1}\right)
\in I\times \mathbb{R}^{2}$.
\end{definition}

\begin{theorem}
\label{Suficiente}
Suppose that $a<b$ and $a,b\in \lbrack c]_{q,\omega}$ for some $c\in I$.
Also, assume that $L$ is a jointly convex (resp. concave) function in
$\left(u,v\right) $. If $y_{\ast }$ satisfies the Hahn symmetric
Euler--Lagrange equation \eqref{EqEuler}, then $y_{\ast }$
is a global minimizer (resp. maximizer) to problem \eqref{P}.
\end{theorem}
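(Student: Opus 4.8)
The plan is to follow the standard convexity argument from the classical calculus of variations, adapted to the Hahn symmetric setting. Let $y_{\ast}$ satisfy the Euler--Lagrange equation \eqref{EqEuler} and let $y$ be any admissible function. Set $\eta := y - y_{\ast}$, which is an admissible variation since $\eta(a) = \eta(b) = 0$. I would start by writing the difference of the functionals as
\begin{equation*}
\mathcal{L}(y) - \mathcal{L}(y_{\ast})
= \int_{a}^{b} \left[ L\left(t, y^{\sigma}, \tilde{D}_{q,\omega}[y]\right)
- L\left(t, y_{\ast}^{\sigma}, \tilde{D}_{q,\omega}[y_{\ast}]\right) \right] \tilde{d}_{q,\omega} t,
\end{equation*}
and then apply the joint convexity inequality pointwise (with $u = y_{\ast}^{\sigma}(t)$, $v = \tilde{D}_{q,\omega}[y_{\ast}](t)$, $u_{1} = \eta^{\sigma}(t)$, $v_{1} = \tilde{D}_{q,\omega}[\eta](t)$) to bound the integrand below by $\partial_{2}L \cdot \eta^{\sigma} + \partial_{3}L \cdot \tilde{D}_{q,\omega}[\eta]$, evaluated along $y_{\ast}$.

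Here a subtlety arises, and this is the step I expect to be the main obstacle: passing the pointwise inequality through the Hahn symmetric integral is not automatic, since — as the remark after the Corollary explicitly shows — the Hahn symmetric integral does \emph{not} preserve nonnegativity of functions on an arbitrary interval $[a,b]$. This is precisely why the hypothesis $a, b \in [c]_{q,\omega}$ for some $c \in I$ is imposed. Under that hypothesis the interval decomposes in a way compatible with the Corollary: writing $\int_{a}^{b} = \int_{\omega_{0}}^{b} - \int_{\omega_{0}}^{a}$ and using that both $a$ and $b$ lie in $[c]_{q,\omega} = \{\sigma^{2n+1}(c) : n \in \mathbb{N}_{0}\} \cup \{\omega_{0}\}$, one checks that the relevant sums telescope so that the monotonicity statement of the Corollary (or Proposition~\ref{desigualdade}) applies to the nonnegative integrand. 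I would verify this reduction carefully: if $a = \sigma^{2j+1}(c)$ and $b = \sigma^{2i+1}(c)$ with, say, $i < j$ (so $b > a$ when $c > \omega_0$, or the analogous ordering otherwise), then $\int_{a}^{b}$ picks out finitely many terms $q^{2n+1} f^{\sigma^{2n+1}}(c)$ with the correct sign, and nonnegativity of $f$ on $[c]_{q,\omega}$ forces $\int_{a}^{b} f \geq 0$.

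Having established that $\mathcal{L}(y) - \mathcal{L}(y_{\ast}) \geq \int_{a}^{b} \left[ \partial_{2}L(t, y_{\ast}^{\sigma}, \tilde{D}_{q,\omega}[y_{\ast}]) \eta^{\sigma}(t) + \partial_{3}L(t, y_{\ast}^{\sigma}, \tilde{D}_{q,\omega}[y_{\ast}]) \tilde{D}_{q,\omega}[\eta](t) \right] \tilde{d}_{q,\omega} t$, the remaining work is to show the right-hand side vanishes. I would integrate the $\partial_{3}L$ term by parts using formula \eqref{partes}, exactly as in the proof of Theorem~\ref{Euler}: the boundary term $\partial_{3}L(\sigma(t), y_{\ast}^{\sigma^{2}}(t), (\tilde{D}_{q,\omega}[y_{\ast}])^{\sigma}(t)) \eta(t) \big|_{a}^{b}$ is zero because $\eta(a) = \eta(b) = 0$, and what remains is
\begin{equation*}
\int_{a}^{b} \left[ \partial_{2}L\left(t, y_{\ast}^{\sigma}, \tilde{D}_{q,\omega}[y_{\ast}]\right)
- \tilde{D}_{q,\omega}\left[\tau \mapsto \partial_{3}L\left(\sigma(\tau), y_{\ast}^{\sigma^{2}}(\tau), (\tilde{D}_{q,\omega}[y_{\ast}])^{\sigma}(\tau)\right)\right](t) \right] \eta^{\sigma}(t) \, \tilde{d}_{q,\omega} t,
\end{equation*}
which is identically zero since $y_{\ast}$ satisfies \eqref{EqEuler}. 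Hence $\mathcal{L}(y) \geq \mathcal{L}(y_{\ast})$ for every admissible $y$, proving $y_{\ast}$ is a global minimizer. The concave case is entirely analogous with all inequalities reversed, giving a global maximizer. I would also remark that since the conclusion is global, the norm restriction in the definition of local extremizer plays no role here; the only genuinely new ingredient compared to the classical proof is the careful handling of the sign issue for the Hahn symmetric integral via the hypothesis $a, b \in [c]_{q,\omega}$.
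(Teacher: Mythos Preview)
Your proposal is correct and follows essentially the same route as the paper: apply joint convexity under the integral, integrate the $\partial_{3}L$ term by parts via \eqref{partes}, and use \eqref{EqEuler} together with $\eta(a)=\eta(b)=0$ to kill both the boundary term and the remaining integrand. In fact you are more careful than the paper on one point: the paper passes from the pointwise convexity inequality to the integral inequality without comment, whereas you correctly identify that this step is not automatic for the Hahn symmetric integral and explain how the hypothesis $a,b\in[c]_{q,\omega}$ makes the finite telescoped sum nonnegative --- this is exactly the role that hypothesis plays, and your justification of it is sound.
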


\begin{proof}
Let $L$ be a jointly convex function in $\left( u,v\right)$
(the concave case is similar). Then,
for any admissible variation $\eta$, we have
\begin{align*}
\mathcal{L}( y_{\ast} &+ \eta ) -\mathcal{L}\left( y_{\ast}\right) \\
&= \int_{a}^{b}\Biggl(L\left( t,y^{\sigma }\left( t\right)
+\eta ^{\sigma }\left( t\right),\tilde{D}_{q,\omega }\left[ y\right]
\left( t\right) +\tilde{D}_{q,\omega }\left[ \eta \right] \left( t\right) \right)\\
&\qquad\qquad -L\left(t,y^{\sigma}\left( t\right), \tilde{D}_{q,\omega }\left[
y\right] \left( t\right) \right) \Biggr)\tilde{d}_{q,\omega }t \\
&\geq \int_{a}^{b}\Biggl( \partial_{2} L\left( t,y^{\sigma }\left( t\right),
\tilde{D}_{q,\omega }\left[ y\right] \left( t\right) \right) \eta^{\sigma}\left( t\right)\\
&\qquad\qquad +\partial_{3} L\left( t,y^{\sigma }\left( t\right), \tilde{D}_{q,\omega }\left[
y\right] \left( t\right) \right) \tilde{D}_{q,\omega } \left[ \eta \right] \left(t\right)
\Biggr) \tilde{d}_{q,\omega }t.
\end{align*}
Using the integration by parts formula \eqref{partes} and Lemma~\ref{composta}, we get
\begin{align*}
\mathcal{L}\left( y_{\ast }+\eta \right) &-\mathcal{L}\left( y_{\ast }\right)
\geq \partial_{3}L\left( \sigma \left( t\right) ,y_{\ast }^{\sigma^{2}}\left(t\right),
\left( \tilde{D}_{q,\omega }\left[ y\right] \right)^{\sigma }\left( t\right) \right)
\eta \left( t\right) \bigg|_{a}^{b} \\
&+ \int_{a}^{b}\bigg[\partial _{2}L\left( t,y_{\ast }^{\sigma }\left(t\right),
\tilde{D}_{q,\omega }\left[ y_{\ast }\right] \left( t\right)\right)\\
&- \tilde{D}_{q,\omega }\left[\tau \mapsto \partial_{3}L\left( \sigma\left(\tau \right),
y^{\sigma ^{2}}\left(\tau\right), \left( \tilde{D}_{q,\omega }\left[ y\right]
\right)^{\sigma }\left(\tau\right) \right) \left( t\right) \right] \bigg]
\eta^{\sigma }\left( t\right) \tilde{d}_{q,\omega }t.
\end{align*}
Since $y_{\ast }$ satisfies \eqref{EqEuler} and $\eta $
is an admissible variation, we obtain
\begin{equation*}
\mathcal{L}\left( y_{\ast }+\eta \right) -\mathcal{L}\left( y_{\ast }\right) \geq 0,
\end{equation*}
proving that $y_{*}$ is a minimizer to problem \eqref{P}.
\end{proof}

\begin{ex}
Let $q\in \left] 0,1\right[ $ and $\omega \geq 0$ be  fixed real numbers. Also,
let $I \subseteq \mathbb{R}$ be an interval such that
$a:=\omega_{0},b\in I$ and $a<b$. Consider the problem
\begin{equation}
\label{eq:prb:ex1}
\begin{gathered}
\mathcal{L}\left( y\right) = \int_{a}^{b}
\sqrt{1+\left( \tilde{D}_{q,\omega } \left[ y\right]
\left( t\right) \right) ^{2}} \tilde{d}_{q,\omega}t
\longrightarrow \min \\
y\in \mathcal{Y}^{1}\left( \left[ a,b\right]_{q,\omega },\mathbb{R}\right)\\
y\left( a\right) =a,\quad y\left( b\right) =b.
\end{gathered}
\end{equation}
If $y_{\ast }$ is a local minimizer to the problem, then $y_{\ast}$
satisfies the Hahn symmetric Euler--Lagrange equation
\begin{equation}
\label{eq:el:ex1}
\tilde{D}_{q,\omega }\left[ \tau \mapsto \frac{\left(\tilde{D}_{q,\omega }\left[
y\right] \right)^{\sigma }\left( \tau\right)}{\sqrt{1+\left(\left(
\tilde{D}_{q,\omega }\left[ y\right] \right)^{\sigma }\left( \tau \right)\right) ^{2}}} \right]
\left( t\right) =0 \ \text{ for all }\ t \in \left[ a,b\right]_{q,\omega}.
\end{equation}
It is simple to check that function $y_{\ast }\left( t\right) =t$
is a solution to \eqref{eq:el:ex1} satisfying the given boundary conditions.
Since the Lagrangian is jointly convex in $(u,v)$, then we conclude from
Theorem~\ref{Suficiente} that function $y_{\ast }\left( t\right) =t$
is indeed a minimizer to problem \eqref{eq:prb:ex1}.
\end{ex}


\subsection{Leitmann's Direct Method}
\label{L}

Similarly to Malinowska and Torres \cite{Malinowska}, we show that Leitmann's
direct method \cite{Leitmann3} has also applications in the Hahn symmetric
variational calculus. Consider the variational functional integral
\begin{equation*}
\mathcal{\bar{L}}\left( \bar{y}\right)
= \int_{a}^{b}\bar{L}\left( t,\bar{y}^{\sigma }\left( t\right),
\tilde{D}_{q,\omega }\left[ \bar{y}\right]\left(t\right)\right)
\tilde{d}_{q,\omega }t.
\end{equation*}
As before, we assume that function $\bar{L}:I\times \mathbb{R} \times \mathbb{R}
\rightarrow \mathbb{R}$ satisfies the following hypotheses:
\begin{enumerate}
\item[($\overline{\text{H1}}$)] $\left( u,v\right)
\rightarrow \bar{L}\left( t,u,v\right) $ is a
$C^{1}\left( \mathbb{R}^{2},\mathbb{R}\right)$ function for any $t\in I$;

\item[($\overline{\text{H2}}$)] $t\rightarrow \bar{L}\left( t,\bar{y}^{\sigma }\left( t\right),
\tilde{D}_{q,\omega }\left[ \bar{y}\right] \left( t\right)\right)$ is continuous at
$\omega_{0}$ for any admissible function $\bar{y}$;

\item[($\overline{\text{H3}}$)] functions $t\rightarrow \partial_{i+2}\bar{L}\left(t,
\bar{y}^{\sigma }\left( t\right), \tilde{D}_{q,\omega }\left[\bar{y}\right] \left(
t\right) \right)$ belong to $\mathcal{Y}^{1}\left(\left[a,b\right]_{q,\omega},
\mathbb{R}\right)$ for all admissible $\bar{y}$, $i=0,1$.
\end{enumerate}

\begin{lemma}[Leitmann's fundamental lemma via Hahn's symmetric quantum operator]
Let $y=z\left( t,\bar{y}\right) $ be a transformation having a unique inverse
$\bar{y}=\bar{z}\left( t,y\right)$ for all $t\in \left[ a,b\right]_{q,\omega }$,
such that there is a one-to-one correspondence
\begin{equation*}
y\left( t\right) \leftrightarrow\bar{y}\left( t\right)
\end{equation*}
for all functions $y\in \mathcal{Y}^{1}\left(\left[a,b\right]_{q,\omega},\mathbb{R}\right)$
satisfying the boundary conditions $y\left( a\right) =\alpha $ and $y\left( b\right) =\beta$
and all functions $\bar{y}\in \mathcal{Y}^{1}\left(\left[a,b\right]_{q,\omega},
\mathbb{R}\right)$ satisfying
\begin{equation}
\label{leit}
\bar{y}\left(a\right)=\bar{z}\left( a,\alpha \right)
\text{ and }\bar{y}\left(b\right)=\bar{z}\left( b,\beta \right) \text{.}
\end{equation}
If the transformation $y=z\left( t,\bar{y}\right)$ is such that there exists
a function $G:I\times \mathbb{R} \rightarrow \mathbb{R}$ satisfying the identity
\begin{equation*}
L\left( t,y^{\sigma }\left( t\right) ,\tilde{D}_{q,\omega }\left[
y\right] \left( t\right) \right) -\bar{L}\left( t,\bar{y}^{\sigma }\left( t\right),
\tilde{D}_{q,\omega }\left[ \bar{y}\right] \left( t\right) \right)
=\tilde{D}_{q,\omega }\left[\tau \mapsto G\left(
\tau,\bar{y}\left( \tau\right)\right)\right](t) ,
\end{equation*}
$\forall t\in [a,b]_{q,\omega}$, then if $\bar{y}_{\ast}$ is a maximizer (resp. minimizer)
of $\mathcal{\bar{L}}$ with $\bar{y}_{\ast }$ satisfying \eqref{leit},
$y_{\ast }=z\left( t,\bar{y}_{\ast}\right)$ is a maximizer (resp. minimizer)
of $\mathcal{L}$ for $y_{\ast }$ satisfying $y_{\ast }\left( a\right) =\alpha$
and $y_{\ast }\left( b\right) =\beta$.
\end{lemma}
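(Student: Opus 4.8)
The plan is to exploit the fact that the Hahn symmetric integral of a derivative telescopes (the fundamental theorem, Theorem~\ref{Fundamental}), so that the ``gauge'' term $\tilde{D}_{q,\omega}[\tau\mapsto G(\tau,\bar y(\tau))]$ contributes only boundary values and the two functionals differ by a constant depending solely on the endpoint data. First I would fix an arbitrary admissible $\bar y$ for the barred problem (so $\bar y(a)=\bar z(a,\alpha)$, $\bar y(b)=\bar z(b,\beta)$) and let $y=z(t,\bar y)$ be the corresponding function; by hypothesis $y$ is admissible for \eqref{P}, i.e.\ $y(a)=\alpha$ and $y(b)=\beta$. Integrating the assumed identity from $a$ to $b$ with respect to $\tilde d_{q,\omega}t$ and applying Theorem~\ref{Fundamental} to the right-hand side gives
\begin{equation*}
\mathcal{L}(y)-\mathcal{\bar L}(\bar y)=G(b,\bar y(b))-G(a,\bar y(a))=G(b,\bar z(b,\beta))-G(a,\bar z(a,\alpha)),
\end{equation*}
and the right-hand side is a constant $C$ independent of the particular admissible pair.

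Next I would record the precise consequence: for \emph{every} admissible $\bar y$ with its associated $y=z(t,\bar y)$, one has $\mathcal{L}(y)=\mathcal{\bar L}(\bar y)+C$. Because the transformation sets up a one-to-one correspondence between the admissible class for the barred problem and the admissible class for \eqref{P}, ranging over all admissible $\bar y$ is the same as ranging over all admissible $y$. Hence $\mathcal{\bar L}$ and $\mathcal{L}$ differ by the additive constant $C$ along corresponding functions, and adding a constant does not change the location of maximizers or minimizers.

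Finally I would conclude the extremality transfer. Suppose $\bar y_\ast$ is a maximizer of $\mathcal{\bar L}$ among admissible functions satisfying \eqref{leit}, and set $y_\ast=z(t,\bar y_\ast)$, which is admissible for \eqref{P}. For any admissible $y$ of \eqref{P}, let $\bar y=\bar z(t,y)$ be its (unique) preimage; then
\begin{equation*}
\mathcal{L}(y)=\mathcal{\bar L}(\bar y)+C\le \mathcal{\bar L}(\bar y_\ast)+C=\mathcal{L}(y_\ast),
\end{equation*}
so $y_\ast$ maximizes $\mathcal{L}$. The minimizer case is identical with inequalities reversed. I do not expect a genuine obstacle here: the only points requiring care are (i) checking that applying Theorem~\ref{Fundamental} is legitimate, which needs $\tau\mapsto G(\tau,\bar y(\tau))$ to lie in the class to which the fundamental theorem applies (this is implicit in the identity holding on all of $[a,b]_{q,\omega}$ together with hypotheses $(\overline{\text{H1}})$--$(\overline{\text{H3}})$), and (ii) making sure the correspondence is used in both directions so that ``for all admissible $y$'' is covered. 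Both are straightforward bookkeeping rather than a substantive difficulty.
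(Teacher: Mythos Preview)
Your proposal is correct and follows essentially the same approach as the paper: integrate the assumed identity over $[a,b]$, apply the fundamental theorem (Theorem~\ref{Fundamental}) so that the right-hand side collapses to the boundary constant $G(b,\bar z(b,\beta))-G(a,\bar z(a,\alpha))$, and conclude that $\mathcal{L}$ and $\mathcal{\bar L}$ differ by a fixed constant along corresponding admissible functions. The only cosmetic difference is that the paper starts from an admissible $y$ and passes to $\bar y=\bar z(t,y)$, whereas you start from $\bar y$ and pass to $y=z(t,\bar y)$ before invoking the inverse direction in the final inequality chain; your explicit extremality-transfer argument is in fact more detailed than the paper's, which simply notes that the constant difference suffices.
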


\begin{proof}
Suppose $y\in \mathcal{Y}^{1}\left( \left[ a,b\right]_{q,\omega },\mathbb{R}\right)$
satisfies the boundary conditions $y\left( a\right) =\alpha $ and $y\left(b\right) =\beta$.
Define function $\bar{y}\in \mathcal{Y}^{1}\left( \left[ a,b\right]_{q,\omega },\mathbb{R}\right)$
through the formula $\bar{y}=\bar{z}\left(t,y\right) $, $t\in \left[ a,b\right] _{q,\omega }$.
Then, $\bar{y}$ satisfies \eqref{leit} and
\begin{equation*}
\begin{split}
\mathcal{L}&\left( y\right) -\mathcal{\bar{L}}\left( \bar{y}\right)\\
&=\int_{a}^{b}L\left( t,y^{\sigma }\left( t\right), \tilde{D}_{q,\omega}\left[
y\right]\left( t\right) \right) \tilde{d}_{q,\omega }t
-\int_{a}^{b}\bar{L}\left( t,\bar{y}^{\sigma }\left( t\right),
\tilde{D}_{q,\omega }\left[ \bar{y}\right]
\left( t\right) \right) \tilde{d}_{q,\omega }t \\
&= \int_{a}^{b}\tilde{D}_{q,\omega }\left[\tau
\mapsto G\left(\tau, \bar{y}\left(\tau\right) \right)\right](t) \,
\tilde{d}_{q,\omega }t \\ &= G\left( b,\bar{y}\left( b\right) \right)
- G\left( a,\bar{y}\left(a\right) \right)\\ &= G\left( b,\bar{z}\left( b,\beta \right)\right)
- G\left( a,\bar{z}\left(a,\alpha \right) \right) .
\end{split}
\end{equation*}
The desired result follows immediately because the right-hand side of the above equality
is a constant, depending only on the fixed-endpoint conditions
$y\left(a\right) =\alpha $ and $y\left( b\right) =\beta$.
\end{proof}

\begin{ex}
Let $q\in\left]0,1\right[$, $\omega \geq 0$, and $a:=\omega_{0},b$
with $\omega_{0}<b$ be fixed real numbers. Also, let $I$ be an interval
of $\mathbb{R}$ such that $\omega _{0},b\in I$. We consider the problem
\begin{equation}
\label{Lproblem}
\begin{gathered}
\mathcal{L}\left( y\right)
=\int_{a}^{b}\left( \left( \tilde{D}_{q,\omega}\left[ y\right]\left( t\right) \right)^{2}
+ qy^{\sigma }\left( t\right) + t \tilde{D}_{q,\omega }\left[ y\right] \left( t\right) \right)
\tilde{d}_{q,\omega }t \longrightarrow \min \\ y\in \mathcal{Y}^{1}\left(
\left[ a,b\right]_{q,\omega },\mathbb{R}\right)\\ y\left( a\right) =\alpha,
\quad y\left( b\right) =\beta,
\end{gathered}
\end{equation}
where $\alpha ,\beta \in \mathbb{R}$ and $\alpha \neq \beta$.
We transform problem \eqref{Lproblem} into the trivial problem
\begin{equation*}
\begin{gathered}
\mathcal{\bar{L}}\left( \bar{y}\right)
= \int_{a}^{b} \left( \tilde{D}_{q,\omega }\left[ \bar{y}\right] \left( t\right) \right)^{2}
\tilde{d}_{q,\omega} t\longrightarrow \min \\
\bar{y}\in \mathcal{Y}^{1}\left( \left[ a,b\right] _{q,\omega },\mathbb{R}\right)\\
\bar{y}\left( a\right) =0, \quad \bar{y}\left(b\right) =0,
\end{gathered}
\end{equation*}
which has solution $\bar{y}\equiv 0$. For that we consider the transformation
\begin{equation*}
y\left( t\right) =\bar{y}\left( t\right) +ct+d,
\end{equation*}
where $c,d$ are real constants that will be chosen later.
Since $y^{\sigma }\left( t\right) =\bar{y}^{\sigma }\left( t\right)
+c\sigma \left(t\right) +d$ and $\tilde{D}_{q,\omega }\left[ y\right] \left( t\right)
=\tilde{D}_{q,\omega} \left[ \bar{y}\right] \left( t\right) +c$, we have
\begin{equation*}
\begin{split}
&\left( \tilde{D}_{q,\omega }\left[ y\right] \left( t\right) \right)^{2}
+ qy^{\sigma }\left( t\right) +t\tilde{D}_{q,\omega }\left[ y\right] \left( t\right) \\
&=\left( \tilde{D}_{q,\omega }\left[ \bar{y}\right] \left( t\right) \right)^{2}
+2c\tilde{D}_{q,\omega }\left[ \bar{y}\right] \left( t\right) + c^{2}+q d
+ q\bar{y}^{\sigma}\left(t\right) + t\tilde{D}_{q,\omega }\left[
\bar{y}\right] \left( t\right) + c\left( q\sigma \left( t\right) +t\right).
\end{split}
\end{equation*}
Therefore,
\begin{equation*}
\begin{split}
&\left[ \left( \tilde{D}_{q,\omega }\left[ y\right] \left( t\right) \right)^{2}
+qy^{\sigma }\left( t\right) +t\tilde{D}_{q,\omega }\left[ y\right] \left(t\right) \right]
-\left( \tilde{D}_{q,\omega }\left[ \bar{y}\right] \left( t\right) \right)^{2}\\
&= \tilde{D}_{q,\omega }\left[ 2c\bar{y}\right] \left( t\right)
+\tilde{D}_{q,\omega }\left[ \left( c^{2}+qd\right) id\right] \left( t\right)
+ \tilde{D}_{q,\omega }\left[ \sigma \cdot \bar{y}\right] \left( t\right)
+ c \tilde{D}_{q,\omega }\left[ \sigma \cdot id\right] \left( t\right) \\
&= \tilde{D}_{q,\omega }\left[ 2c\bar{y}+\left( c^{2}+qd\right) id
+\sigma \cdot \bar{y}+c\left( \sigma \cdot id\right) \right] \left( t\right),
\end{split}
\end{equation*}
where $id$ represents the identity function. In order to obtain the solution
to the original problem, it suffices to choose $c$ and $d$ such that
\begin{equation}
\label{sol}
\left\{
\begin{array}{c}
ca+d=\alpha \\
cb+d=\beta .
\end{array}
\right.
\end{equation}
Solving the system of equations \eqref{sol} we obtain
$c=\displaystyle\frac{\alpha -\beta}{a-b}$ and
$d=\displaystyle\frac{a\beta -b\alpha }{a-b}$.
Hence, the global minimizer to problem \eqref{Lproblem} is
\begin{equation*}
y\left( t\right) =\frac{\alpha -\beta }{a-b}t
+\frac{a\beta -b\alpha }{a-b}.
\end{equation*}
\end{ex}


\section*{Acknowledgments}

Work supported by {\it FEDER} funds through {\it COMPETE} --- Operational
Programme Factors of Competitiveness (``Programa Operacional Factores de
Competitividade'') and by Portuguese funds through the {\it Center
for Research and Development in Mathematics and Applications} (University of Aveiro)
and the Portuguese Foundation for Science and Technology
(``FCT --- Funda\c{c}\~{a}o para a Ci\^{e}ncia e a Tecnologia''),
within project PEst-C/MAT/UI4106/2011 with COMPETE number
FCOMP-01-0124-FEDER-022690. The first author was also supported
by FCT through the Ph.D. fellowship SFRH/BD/33634/2009.



\medskip
Received December 2011; revised September 2012.
\medskip



\begin{thebibliography}{99}

\bibitem{Aldwoah}
K. A. Aldwoah,
\emph{Generalized time scales and associated difference equations},
Ph.D. thesis, Cairo University, 2009.

\bibitem{Aldwoah2} (MR2918250)
K. A. Aldwoah, A. B. Malinowska\ and\ D. F. M. Torres,
\emph{The power quantum calculus and variational problems},
Dyn. Contin. Discrete Impuls. Syst. Ser. B Appl. Algorithms
{\bf 19} (2012), no.~1-2, 93--116.
{\tt arXiv:1107.0344}

\bibitem{MyID:154} (MR2727133)
R. Almeida\ and\ D. F. M. Torres,
\emph{Leitmann's direct method for fractional optimization problems},
Appl. Math. Comput. {\bf 217} (2010), no.~3, 956--962.
{\tt arXiv:1003.3088}

\bibitem{MyID:188} (MR2861739)
R. Almeida\ and\ D. F. M. Torres,
\emph{Nondifferentiable variational principles in terms of a quantum operator},
Math. Methods Appl. Sci. {\bf 34} (2011), no.~18, 2231--2241.
{\tt arXiv:1106.3831}

\bibitem{Boole} (MR0115025)
G. Boole,
{\it Calculus of finite differences},
Edited by J. F. Moulton 4th ed, Chelsea Publishing Co. New York, 1957.

\bibitem{Cruz} (MR2861326)
A. M. C. Brito da Cruz, N. Martins\ and\ D. F. M. Torres,
\emph{Higher-order Hahn's quantum variational calculus},
Nonlinear Anal. 75 (2012), no.~3, 1147--1157.
{\tt arXiv:1101.3653}

\bibitem{Cruz2}
A. M. C. Brito da Cruz\ and\ N. Martins,
The $q$-symmetric variational calculus,
Comput. Math. Appl., in press.
DOI:10.1016/j.camwa.2012.01.076

\bibitem{Leitmann1} (MR2187205)
D. A. Carlson\ and\ G. Leitmann,
\emph{Coordinate transformation method for the extremization of multiple integrals},
J. Optim. Theory Appl. {\bf 127} (2005), no.~3, 523--533.

\bibitem{Leitmann2} (MR2373538)
D. A. Carlson\ and\ G. Leitmann,
\emph{Fields of extremals and sufficient conditions
for the simplest problem of the calculus of variations},
J. Global Optim. {\bf 40} (2008), no.~1-3, 41--50.

\bibitem{MyID:111} (MR2549615)
J. Cresson, G. S. F. Frederico\ and\ D. F. M. Torres,
\emph{Constants of motion for non-differentiable quantum variational problems},
Topol. Methods Nonlinear Anal. {\bf 33} (2009), no.~2, 217--231.
{\tt arXiv:0805.0720}

\bibitem{Ernst} (MR2554406)
T. Ernst,
\emph{The different tongues of $q$-calculus},
Proc. Est. Acad. Sci. {\bf 57} (2008), no.~2, 81--99.

\bibitem{Feynman} (MR2797644)
R. P. Feynman\ and\ A. R. Hibbs,
{\it Quantum mechanics and path integrals},
emended edition, Dover, Mineola, NY, 2010.

\bibitem{Hahn} (MR0030647)
W. Hahn,
\emph{\"Uber Orthogonalpolynome,
die $q$-Differenzengleichungen gen\"ugen},
Math. Nachr. {\bf 2} (1949), 4--34.

\bibitem{Jackson} (MR1506108)
F. H. Jackson,
\emph{$q$-Difference equations},
Amer. J. Math. {\bf 32} (1910), no.~4, 305--314.

\bibitem{Kac} (MR1865777)
V. Kac\ and\ P. Cheung,
{\it Quantum calculus},
Universitext, Springer, New York, 2002.

\bibitem{Koekoek} (MR2656096)
R. Koekoek, P. A. Lesky\ and\ R. F. Swarttouw,
{\it Hypergeometric orthogonal polynomials and their $q$-analogues},
Springer Monographs in Mathematics, Springer, Berlin, 2010.

\bibitem{Lavagno2}
A. Lavagno\ and\ G. Gervino,
\emph{Quantum mechanics in $q$-deformed calculus},
J. Phys.: Conf. Ser. {\bf 174} (2009), 012071, 8~pp.

\bibitem{Lavagno} (MR1923946)
A. Lavagno\ and\ P. Narayana Swamy,
\emph{$q$-deformed structures and nonextensive statistics: a comparative study},
Phys. A {\bf 305} (2002), no.~1-2, 310--315.

\bibitem{Leitmann3} (MR0230187)
G. Leitmann,
\emph{A note on absolute extrema of certain integrals},
Internat. J. Non-Linear Mech. {\bf 2} (1967), 55--59.

\bibitem{Leitmann4} (MR1828668)
G. Leitmann,
\emph{On a class of direct optimization problems},
J. Optim. Theory Appl. {\bf 108} (2001), no.~3, 467--481.

\bibitem{Leitmann5} (MR1850676)
G. Leitmann,
\emph{Some extensions to a direct optimization method},
J. Optim. Theory Appl. {\bf 111} (2001), no.~1, 1--6.

\bibitem{Leitmann6} (MR1954118)
G. Leitmann,
\emph{On a method of direct optimization},
Vychisl. Tekhnol. {\bf 7} (2002), 63--67.

\bibitem{Malinowska2} (MR2727155)
A. B. Malinowska\ and\ D. F. M. Torres,
\emph{Leitmann's direct method of optimization for absolute extrema
of certain problems of the calculus of variations on time scales},
Appl. Math. Comput. {\bf 217} (2010), no.~3, 1158--1162.
{\tt arXiv:1001.1455}

\bibitem{Malinowska} (MR2733985)
A. B. Malinowska\ and\ D. F. M. Torres,
\emph{The Hahn quantum variational calculus},
J. Optim. Theory Appl. {\bf 147} (2010), no.~3, 419--442.
{\tt arXiv:1006.3765}

\bibitem{MyID:146} (MR2793813)
N. Martins\ and\ D. F. M. Torres,
\emph{L'H\^{o}pital-type rules for monotonicity
with application to quantum calculus},
Int. J. Math. Comput. {\bf 10} (2011), M11, 99--106.
{\tt arXiv:1011.4880}

\bibitem{MyID:220}
N. Martins\ and\ D. F. M. Torres,
\emph{Higher-order infinite horizon variational problems
in discrete quantum calculus},
Comput. Math. Appl., in press.
DOI:10.1016/j.camwa.2011.12.006
{\tt arXiv:1112.0787}

\bibitem{Strominger} (MR1247149)
D. N. Page,
\emph{Information in black hole radiation},
Phys. Rev. Lett. {\bf 71} (1993), no.~23, 3743--3746.

\bibitem{delfim:GL} (MR2386612)
D. F. M. Torres\ and\ G. Leitmann,
\emph{Contrasting two transformation-based methods for obtaining absolute extrema},
J. Optim. Theory Appl. {\bf 137} (2008), no.~1, 53--59.
{\tt arXiv:0704.0473}

\bibitem{Youm}
D. Youm,
\emph{$q$-deformed conformal quantum mechanics},
Phys. Rev. D {\bf 62} (2000), 095009, 5~pp.

\end{thebibliography}
\end{document}